\theoremstyle{plain}
\newtheorem{theorem}{Theorem}[section]
\newtheorem{proposition}[theorem]{Proposition}
\newtheorem{lemma}[theorem]{Lemma}
\newtheorem{corollary}[theorem]{Corollary}
\newtheorem{problem}[theorem]{Problem}
\theoremstyle{definition}
\newtheorem{definition}[theorem]{Definition}
\newtheorem{conjecture}[theorem]{Conjecture}
\newtheorem*{acknowledgements}{Acknowledgements}
\theoremstyle{remark}
\newtheorem{remark}[theorem]{Remark}
\newtheorem{notation}[theorem]{Notation}
\newtheorem{convention}[theorem]{Convention}
\newtheorem{condition}{Condition}
\numberwithin{equation}{theorem}
\DeclareMathOperator{\Pic}{Pic}
\DeclareMathOperator{\NE}{NE}
\DeclareMathOperator{\Psef}{Psef}
\DeclareMathOperator{\Bigcone}{Big}
\newcommand{\sE}{\mathscr{E}}
\newcommand{\sF}{\mathscr{F}}
\newcommand{\sC}{\mathscr{C}}
\newcommand{\sS}{\mathscr{S}}
\newcommand{\sN}{\mathscr{N}}
\newcommand{\cO}{\mathcal{O}}
\newcommand{\bC}{\mathbb{C}}
\newcommand{\bP}{\mathbb{P}}
\newcommand{\bQ}{\mathbb{Q}}
\newcommand{\bZ}{\mathbb{Z}}
\newcommand{\bO}{\mathbb{O}}
\newcommand{\pbP}[2]{\left(\bP^{#1}\right)^{#2}}
\newcommand{\pbQ}[2]{\left(\bQ^{#1}\right)^{#2}}
\newcommand{\pbPB}[2]{\big(\bP(#1)\big)^{#2}}
   \def\MR#1{}
\title{Extremal rays and nefness of tangent bundles}
\author[A. KANEMITSU]{Akihiro KANEMITSU}
\date{\today}
\address{Graduate School of Mathematical Sciences\\The University of Tokyo\\3-8-1 Komaba\\Meguro-ku, Tokyo 153-8914, Japan}
\email{kanemitu@ms.u-tokyo.ac.jp}
\subjclass[2010]{Primary: 14J45; Secondary: 14J40, 14M17}
\keywords{extremal ray, Fano manifold, nef tangent bundle, homogeneous manifold}
\begin{document}

\begin{abstract}
In view of Mori theory, rational homogenous manifolds satisfy a recursive condition: every elementary contraction is a rational homogeneous fibration and the image of any elementary contraction also satisfies the same property.
In this paper, we show that a smooth Fano $n$-fold with the same condition and Picard number greater than $n-6$ is either a rational homogeneous manifold or the product of $n-7$ copies of $\mathbb{P}^1$ and a Fano $7$-fold $X_0$ constructed by G. Ottaviani.
We also clarify that $X_0$ has non-nef tangent bundle and in particular is not rational homogeneous.
\end{abstract}

\maketitle

\section*{Introduction}
In view of Mori theory, extremal rays or their contractions play an important role to study projective manifolds with non-nef canonical bundle, for example, Fano manifolds.
In this viewpoint, we notice that any rational homogenous manifold satisfies the following recursive condition:

\begin{condition}\label{cond:*}
For every sequence of elementary contractions
\[
X  \xrightarrow{f_1}  X_1  \xrightarrow{f_2} \cdots \xrightarrow{f_{m-1}} X_{m-1} \xrightarrow{f_m}  X_m,
\]
each $f_i$ is a rational homogeneous fibration.
Here a contraction is called a \emph{rational homogeneous fibration} if it is smooth and every fiber is a rational homogeneous manifold.
\end{condition}

In this paper, we study Fano manifolds with Condition~\ref{cond:*}.
The above condition is motivated by the following conjecture due to Campana and Peternell, which is a generalization of Mori's result  \cite{Mor79} and  known to be true for $n$-folds with Picard number $\rho_X > n-5$ \cite{CP91,CP93,Hwa06,Mok02,Wat14a,Wat15,Kan15a,Kan15b}:

\begin{conjecture}[Campana-Peternell conjecture \cite{CP91}]
\label{conj:CP}
Every Fano manifold $X$ with nef tangent bundle is a rational homogeneous manifold.
\end{conjecture}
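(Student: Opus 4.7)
The conjecture is famously open in general, so I can only outline the standard inductive strategy that underlies all the partial results cited. The plan is to proceed by induction on $n = \dim X$, the base case $n = 1$ being trivial since $\bP^1$ is the only Fano curve. The key structural input, due to Demailly--Peternell--Schneider, is that if $T_X$ is nef then every elementary Mori contraction $f \colon X \to Y$ is a smooth morphism, the target $Y$ is again Fano with nef tangent bundle, and each fiber $F$ is Fano with nef tangent bundle. Consequently, if $\rho_X \geq 2$, induction on dimension yields that both $Y$ and every fiber of $f$ are rational homogeneous; this is precisely the setup captured by Condition~\ref{cond:*} and is the reason the present paper works in that more combinatorial framework.

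Granting the induction hypothesis on fibers and base, the work splits into two essentially independent tasks. The first is a gluing step: given a smooth contraction $f \colon X \to Y$ with rational homogeneous base and fibers, show that $X$ itself is rational homogeneous. I would attempt this by first proving local triviality of $f$, using the vanishing $H^1(F, T_F) = 0$ for a rational homogeneous fiber $F$ together with positivity of the relative tangent sheaf to control obstructions, and then showing via rigidity of $G/P$ that the structure group reduces so that $X = G'/P'$ for a suitable semisimple $G'$ and parabolic $P'$. The second task is the Picard-number-one case $\rho_X = 1$, where no nontrivial contractions exist and the inductive machinery simply does not apply.

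The case $\rho_X = 1$ is the main obstacle. The standard approach is the theory of varieties of minimal rational tangents of Hwang--Mok: fix a minimal dominating family of rational curves on $X$ and study the VMRT $\sC_x \subset \bP(T_x X)$ at a general point, aiming to show that the pair $(T_x X, \sC_x)$ is projectively isomorphic to the model of some rational homogeneous $G/P$. Nefness of $T_X$ should force strong positivity on $\sC_x$, and Cartan--Fubini type extension would then promote the infinitesimal identification to a global biholomorphism $X \cong G/P$. This recognition problem is where a proof must do something genuinely new; all of the partial results cited circumvent it by imposing $\rho_X > n-5$, which either forces $\rho_X = 1$ to occur only in dimensions already classified, or guarantees a nontrivial extremal contraction so that the inductive gluing step takes over.
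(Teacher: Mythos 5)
The statement labelled here is the Campana--Peternell conjecture itself; the paper does not prove it and contains no proof to compare against, so your opening acknowledgement that the problem is open is the correct starting point. What the paper actually proves is a partial result (Theorem~\ref{thm:rhon-6} and Corollary~\ref{cor:CPn-6}), and your description of the general inductive framework --- smoothness of elementary contractions after Demailly--Peternell--Schneider, reduction to Condition~\ref{cond:*}, and the recognition problem for $\rho_X=1$ via varieties of minimal rational tangents --- is a fair summary of the literature the paper builds on.

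There is, however, a concrete error in your outline of the ``gluing step,'' and it is precisely the point this paper is written to make. You assert that once the base and fibers of a smooth elementary contraction are known to be rational homogeneous, local triviality plus rigidity of $G/P$ should let one reduce the structure group and conclude $X\cong G'/P'$. The manifold $X_0=\bP(\sE)$ for the Ottaviani bundle $\sE$ on $\bQ^5$ is a counterexample to that step as you have stated it: it is a Fano $7$-fold carrying two smooth $\bP^2$-fibrations over $\bQ^5$, it satisfies the full recursive Condition~\ref{cond:*} (so every iterated contraction has homogeneous base and fibers), it is already locally trivial as a $\bP^2$-bundle --- and yet it is not homogeneous, and by Theorem~\ref{thm:notnef} its tangent bundle is not even nef. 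Local triviality and rigidity of the fiber do not force a reduction of the structure group to one making the total space homogeneous; a $\bP^2$-bundle over a homogeneous base can fail to be homogeneous. Consequently, any correct proof must re-inject the nefness of $T_X$ at the gluing stage (for instance to exclude $X_0$, whose exclusion in Corollary~\ref{cor:CPn-6} rests exactly on $T_{X_0}$ being non-nef), rather than relying only on the inductively established homogeneity of bases and fibers. Your claim that for $\rho_X\geq 2$ ``the inductive gluing step takes over'' therefore mislocates the difficulty: the $\rho_X=1$ recognition problem is not the only genuine obstacle.
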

Indeed, by reviewing the proof in above cited references \cite{CP91,CP93,Wat14a,Wat15,Kan15b} on Conjecture~\ref{conj:CP} for manifolds with Picard number greater than one, it turns out that Fano $n$-folds with Condition~\ref{cond:*} and Picard number $\rho_X>n-5$ are rational homogeneous manifolds by a similar argument.
We shall give a sketch of this in the present paper (see Sect.~3).

For this fact, one might hope that every Fano manifold with Condition~\ref{cond:*} would be a rational homogeneous manifold.
However, this speculation is not true in general.
In fact, we clarify that the Fano $7$-fold $X_0$ constructed by Ottaviani \cite{Ott88,Ott90} satisfies the following properties (see Theorem~\ref{thm:notnef} for details):

\begin{enumerate}
\item $X_0$ is a Fano $7$-fold with Picard number two which admits two different smooth $\bP^2$-fibrations $\pi$ and $p$  over the five dimensional quadric:
\begin{align}\label{rem:counterexample}
\xymatrix{
       & X_0 \ar[ld]_{\pi} \ar[rd]^{p} &        \\
 \bQ^5 &                               & \bQ^5. \\
}
\end{align}
In particular, $X_0$ satisfies Condition~\ref{cond:*}.
\item The tangent bundle $T_{X_0}$ is not nef.
In particular, $X_0$ is not homogeneous.
\end{enumerate}
Moreover, we show that a Fano $7$-fold with property (1)  is unique up to isomorphism (for a more precise statement, see Theorem~\ref{thm:char}).
In particular, the existence of $X_0$ shows that smooth Fano $n$-folds with $\rho_X > n-6$ is not necessarily rational homogeneous.

The purpose of this paper is to classify Fano $n$-folds with Condition~\ref{cond:*} and Picard number $\rho _X >n-6$:

\begin{theorem}\label{thm:rhon-6}
Let $X$ be a Fano $n$-fold with Condition~\ref{cond:*} and $\rho_X > n-6$.
Then $X$ is either
\begin{enumerate}
 \item  a rational homogeneous manifold or
 \item  $\pbP{1}{n-7} \times ({X_0} \text{ as in \eqref{rem:counterexample}})$. \label{exc}
\end{enumerate}
\end{theorem}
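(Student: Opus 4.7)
The plan is to induct on $n$, using as base cases the classification for $\rho_X > n-5$ sketched in Sect.~3 and the characterization of Ottaviani's $X_0$ stated in Theorem~\ref{thm:char}. For $n \leq 6$, either $\rho_X = 1$---in which case Condition~\ref{cond:*} applied to the contraction $X \to \{\mathrm{pt}\}$ forces $X$ itself to be rational homogeneous---or $\rho_X \geq 2 > n-5$ and the result sketched in Sect.~3 applies. For $n = 7$, the hypothesis gives $\rho_X \geq 2$, and Theorem~\ref{thm:char} yields that $X$ is either rational homogeneous or isomorphic to $X_0 = \pbP{1}{0} \times X_0$.

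For the inductive step ($n \geq 8$) I pick an elementary contraction $f \colon X \to Y$. By Condition~\ref{cond:*}, $f$ is a rational homogeneous fibration with Picard-number-one fibers of some dimension $d \geq 1$. Crucially, $Y$ itself satisfies Condition~\ref{cond:*}: any sequence of elementary contractions starting from $Y$, when prepended by $f$, is a valid sequence from $X$, and Condition~\ref{cond:*} for $X$ then covers $f$ together with the $Y$-sequence. Since $\rho_Y = \rho_X - 1 > n-7 = \dim Y + (d-7)$, whenever $d \geq 2$ we have $\rho_Y > \dim Y - 5$ and Sect.~3 makes $Y$ rational homogeneous; one then invokes rigidity for $G/P$-fibrations over rational homogeneous bases---in the spirit of the Campana--Peternell-type arguments cited in the introduction---to conclude that $X$ too is rational homogeneous. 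We may therefore assume every elementary contraction of $X$ is a $\bP^1$-bundle, so $\dim Y = n-1$ and $\rho_Y > (n-1) - 6$; the inductive hypothesis applied to $Y$ then gives either $Y$ rational homogeneous, or $Y \cong \pbP{1}{n-8} \times X_0$.

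The \emph{main obstacle} is the second sub-case: to show that the $\bP^1$-bundle $f \colon X \to \pbP{1}{n-8} \times X_0$ is trivial, hence $X \cong \pbP{1}{n-7} \times X_0$. My plan is to write $X = \bP_Y(\sE)$ for a normalised rank-$2$ bundle $\sE$ on $Y$ and then compose $f$ successively with each projection of $Y$ onto its $\bP^1$-factors and onto $X_0$; Condition~\ref{cond:*} applied to these longer sequences produces further rational homogeneous fibrations of $X$, which, together with the splitting $\Pic(Y) = \bZ^{n-8} \oplus \Pic(X_0)$ and the Fano hypothesis on $X$, should force $\sE$ to split as a sum of two line bundles pulled back only from the $\bP^1$-factors, and then symmetry between the two summands forces $\sE \cong \cO_Y^{\oplus 2}$ up to twist. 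I expect this splitting argument to be the most delicate step, as it is the point at which the exceptional factor $X_0$ materially enters.

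The remaining configuration, in which every elementary contraction of $X$ has a rational homogeneous base, is handled by an analogous analysis of \emph{pairs} of $\bP^1$-bundle structures $X \to Y_i$: the combinatorics of the extremal rays of $X$, together with rational homogeneity of every $Y_i$, should force $X$ itself to be rational homogeneous---contradicting the standing assumption that $X$ is not homogeneous and thus showing that this scenario does not in fact occur. Combining the two sub-cases completes the induction.
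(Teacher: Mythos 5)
There is a genuine gap, and it is fatal to the inductive step. Your reduction claims that whenever $X$ admits an elementary contraction $f\colon X\to Y$ with fibers of dimension $d\geq 2$, the base $Y$ is rational homogeneous (correct, by $\rho_Y>\dim Y-5$ and Theorem~\ref{thm:rhon-5}) and then that ``rigidity for $G/P$-fibrations over rational homogeneous bases'' forces $X$ itself to be rational homogeneous. No such rigidity holds, and the paper's own central example refutes it: ${X_0}\to\bQ^5$ is a smooth $\bP^2$-fibration over a homogeneous base with homogeneous fibers, yet $T_{X_0}$ is not nef and ${X_0}$ is not homogeneous. Worse, the failure occurs squarely inside your inductive range: for $n=8$ the manifold $\bP^1\times{X_0}$ satisfies $\rho_X=3>n-6$, has an elementary contraction onto the rational homogeneous $6$-fold $\bP^1\times\bQ^5$ with $\bP^2$-fibers ($d=2$), and is \emph{not} homogeneous, but it must appear in conclusion (2) of the theorem. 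Your argument would erroneously classify it as homogeneous and would never reach the configuration ``every elementary contraction is a $\bP^1$-bundle'' that the rest of your plan depends on. The paper avoids this trap entirely: for $n>8$ it uses the inequality $n\leq 2\rho_X+1$ together with Proposition~\ref{prop:2rho} (which rests on Proposition~\ref{prop:iniFT} and the theorem of \cite{OSWW14}) to split off a complete flag factor, $X\simeq Y\times M$, and only then inducts on $Y$; the critical dimensions $n=7$ and $n=8$ are handled by dedicated arguments (Propositions~\ref{prop:7-folds} and \ref{prop:8-folds}).

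Two further points. First, your base case $n=7$ is incomplete: Theorem~\ref{thm:char} only treats the situation where $X$ carries a smooth $\bP^2$-fibration, i.e.\ $\dim Y=5$. A Fano $7$-fold with $\rho_X=2$ and Condition~\ref{cond:*} may instead have $\dim Y=6$ (a $\bP^1$-bundle, requiring the classification of rank-two Fano bundles on $\bP^6$, $\bQ^6$, $G(2,5)$, $LG(3,6)$, yielding the extra homogeneous case $F(1,2;5)$) or $\dim Y=4$ (a $\bP^3$- or $\bQ^3$-fibration, where the paper must run a separate slope/relative-quadric computation to exclude $\bQ^3$-fibrations over a $4$-fold base). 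Second, your concluding paragraph on ``pairs of $\bP^1$-bundle structures'' is essentially an appeal to the theorem of \cite{OSWW14} that a Fano manifold all of whose elementary contractions are smooth $\bP^1$-fibrations is a complete flag manifold; this is a deep input that must be cited, not a consequence of extremal-ray combinatorics. As written, the proposal does not establish the theorem.
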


\medskip
By a theorem of Demailly, Peternell and Schneider \cite[Theorem~5.2]{DPS94} (see also \cite[Theorem~4.4]{SW04} and \cite[Proposition~4]{MOSW15}), a Fano manifold $X$ with nef tangent bundle satisfies Condition~\ref{cond:*} if one assumes Conjecture~\ref{conj:CP} for $k$-folds with Picard number one and $k \leq \dim X - \rho _X +1$.
Note that Conjecture~\ref{conj:CP} is true in dimension at most five.
Hence, as a corollary of Theorem~\ref{thm:rhon-6}, we obtain a result with respect to Conjecture~\ref{conj:CP}:

\begin{corollary}\label{cor:CPn-6}
If Conjecture~\ref{conj:CP} for $6$-folds with Picard number one is true,  then Conjecture~\ref{conj:CP} is true for $n$-folds with $\rho_X > n-6$.
\end{corollary}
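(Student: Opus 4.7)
The plan is to deduce Corollary~\ref{cor:CPn-6} as a direct combination of three ingredients already available in the excerpt: the Demailly--Peternell--Schneider smoothness/homogeneity theorem, Theorem~\ref{thm:rhon-6}, and the non-nefness of $T_{X_0}$ asserted in property~(2) of the Ottaviani example.

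First, I would take a Fano $n$-fold $X$ with nef tangent bundle and $\rho_X>n-6$, and verify that $X$ satisfies Condition~\ref{cond:*}. According to the cited theorem of Demailly--Peternell--Schneider (together with \cite{SW04,MOSW15}), for a Fano manifold with nef $T_X$ it suffices to know Conjecture~\ref{conj:CP} in Picard number one and in dimension $\leq \dim X - \rho_X +1$ in order to conclude that every elementary contraction on every image in a sequence is a smooth rational homogeneous fibration. Here the inequality $\rho_X > n-6$ gives $\dim X - \rho_X +1 \leq 6$, so the required Picard-number-one cases all lie in dimension $\leq 6$. Those with dimension $\leq 5$ are already known, and the dimension-six case is the hypothesis of the corollary. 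Therefore Condition~\ref{cond:*} holds for $X$.

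Next I would invoke Theorem~\ref{thm:rhon-6}: $X$ must be either rational homogeneous, in which case we are done, or isomorphic to $\pbP{1}{n-7}\times X_0$ for Ottaviani's $7$-fold $X_0$.

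Finally, I would eliminate the exceptional case by a nefness argument. Since the tangent bundle of a product splits as $T_{\pbP{1}{n-7}\times X_0}\cong \pr_1^*T_{\pbP{1}{n-7}}\oplus \pr_2^*T_{X_0}$, and since quotients and pullbacks by surjections of nef bundles are nef, nefness of the total tangent bundle would force $T_{X_0}$ to be nef. But property~(2) of the example (to be established in Theorem~\ref{thm:notnef}) asserts that $T_{X_0}$ is not nef, a contradiction with the standing assumption that $T_X$ is nef. Hence only case~(1) of Theorem~\ref{thm:rhon-6} can occur, proving the corollary.

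The argument is almost entirely a bookkeeping exercise once the pieces are in place; the only point where I see anything to be careful about is the arithmetic in the Demailly--Peternell--Schneider reduction, i.e.\ confirming that $\rho_X>n-6$ really does restrict the relevant fiber dimensions to at most $6$, so that no additional Picard-number-one input beyond the hypothesis is needed.
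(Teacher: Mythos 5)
Your argument is correct and follows essentially the same route as the paper: establish Condition~\ref{cond:*} via the Demailly--Peternell--Schneider reduction (the paper writes out the underlying induction on $\rho_X$, using that every $f_1$-fiber is a CP manifold of Picard number one and dimension $\dim X-\rho_X+1\leq 6$, covered by the known cases in dimension $\leq 5$ plus the hypothesis in dimension $6$), then apply Theorem~\ref{thm:rhon-6} and discard the $\pbP{1}{n-7}\times X_0$ case because $T_{X_0}$ is not nef. The only cosmetic difference is that you make this last exclusion explicit, whereas the paper leaves it implicit in the phrase ``it is enough to show that every CP $n$-fold with $\rho_X>n-6$ satisfies Condition~\ref{cond:*}.''
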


\medskip
Note that the above example $X_0$ as in \eqref{rem:counterexample} also gives a negative answer to  the following problem for $q=1$ (cf.\ \cite{Yas12,Yas14}):
\begin{problem}[{\cite[Problem~6.4]{CP92}}]
\label{prob:CP}
Let $X$ be a Fano manifold.
If ${\bigwedge}^{q}T_X$ is nef on every extremal rational curve, then is ${\bigwedge}^{q}T_X$ nef?
\end{problem}

A significant progress concerning Conjecture~\ref{conj:CP} and Problem~\ref{prob:CP} for $q=1$ is obtained by Mu\~noz, Occhetta, Sol\'a-Conde, Watanabe and  Wi\'sniewski \cite{MOSW15,OSWW14};
They show that, if every elementary contraction of a Fano manifold $M$ is a smooth $\bP^1$-fibration, then $M$ is a complete flag manifold.
In particular, Problem~\ref{prob:CP} for $q=1$ and Conjecture~\ref{conj:CP} are affirmative for such Fano manifolds.
For further results or background materials on Conjecture~\ref{conj:CP}, we refer the reader to the article \cite{MOSWW14}.

We explain an outline of this paper:
In Sect.~1, we present some generality on intersection numbers with the relative anticanonical divisor on a projectivised vector bundle.
To study such intersection numbers, we introduce two invariants $d_i(\sE)$ and $\Delta_i (\sE)$ of a vector bundle $\sE$ as a variant of the definition of Segre classes and Chern classes.
The results are used later to study Fano bundles of rank bigger than three.
In Sect.~\ref{sec:char}, we give two descriptions of the manifold ${X_0}$ as in  \eqref{rem:counterexample}
and then a characterization of ${X_0}$  is established.
Here, as in \cite{MOS14,Wat14b}, slopes for Fano bundles (see Definition~\ref{def:slope}) and numerical conditions on slopes play important roles (cf.\ \cite[Section~2]{Kan15b}).
In Sect.~\ref{sec:cont}, we generalize some results in \cite{DPS94,SW04,MOSW15,Kan15b} for Fano manifolds with nef tangent bundle to those for Fano manifolds with Condition~\ref{cond:*}.
Once we obtain these generalization of the results, we may prove Theorem~\ref{thm:rhon-6} for $\rho _X > n-5$ by similar arguments as in \cite{CP93,Wat14a,Wat15,Kan15b}.
For this we only give a sketch of the proof.
In Sect.~\ref{sec:proof}, we complete the proof of Theorem~\ref{thm:rhon-6} for $\rho _X = n-5$ and Corollary~\ref{cor:CPn-6}.

\begin{convention}\label{conv}
We work over the field of complex numbers.
Given a vector bundle $\sE$ on a manifold $Y$, we will denote by $\bP_Y(\sE) =\bP(\sE)$ the Grothendieck projectivization of the vector bundle, and a morphism is called a \emph{$\bP^r$-bundle} if it is isomorphic to the projection of some projectivised vector bundle.
On the other hand, a \emph{smooth $\bP^r$-fibration} is a smooth morphism whose fibers are isomorphic to $\bP^r$.

We will denote by $\sN $ the null-correlation bundle on $\bP ^3$,  by $\sS$ the spinor bundle on $\bQ ^{2n+1}$, by $\sS_i$ $(i=1,2)$ the spinor bundles on $\bQ ^{2n}$  and by $\sC $ the Cayley bundle on $\bQ ^5$.
For the definitions of the null-correlation bundle, the spinor bundles and the Cayley bundle, we refer the reader to \cite{OSS80,Ott88,Ott90}.
\end{convention}

\begin{acknowledgements}
The author wishes to express his gratitude to his supervisor Professor Hiromichi Takagi for his encouragement, comments and suggestions.
The author is also grateful to Professors J\'anos Koll\'ar, Keiji Oguiso and  Kiwamu Watanabe for helpful comments and suggestions.
The author also wishes to thank Doctors Sho Ejiri and Takeru Fukuoka for helpful discussions.
The author is a JSPS Research Fellow and he is supported by the Grant-in-Aid for JSPS fellows (JSPS KAKENHI Grant Number 15J07608).
This work was supported by the Program for Leading Graduate Schools, MEXT, Japan.
\end{acknowledgements}

\section{Preliminaries: classes $d_i(\sE)$ and $\Delta_i (\sE)$ for a vector bundle $\sE$}\label{sec:pre}
In this section we introduce two invariants $d_i(\sE)$ and $\Delta_i (\sE)$ of a vector bundle $\sE$ as a variant of the definition of Segre classes and Chern classes.
Before the definition, we review the definition of Segre classes and Chern classes.
For more details we refer the reader to \cite{Ful98}.
Note that our $\bP(\sE)$ is $P(\sE^*)$ in \cite{Ful98} and our odd Segre classes differ in sign from those in \cite{Ful98}. 
Let $\sE$ be a vector bundle of rank $r$ on a projective manifold $Y$.
Then the $i$-th Segre class $s_i(\sE)$ is defined by the equation
\[
s_i(\sE)=\pi_* \left(\xi_\pi^{r-1+i} \right),
\]
where $\pi \colon \bP(\sE) \to Y$ is the natural projection and $\xi_\pi$ is the tautological divisor.
Then the $i$-th Chern class $c_i(\sE)$ is  defined to be the $i$-th coefficient of $\left(\sum^{\infty}_{i=0} (-1)^i s_i(\sE) t^i\right)^{-1}$.
It is well known that Chern classes vanish for $i>r$.
Hence, by the equations $c_i(\sE) = 0$ for $i>r$, we can describe $s_i(\sE)$ for $i>r$ explicitly by $s_1(\sE), \dots , s_r(\sE)$.
Therefore on the projectivised vector bundle $\bP(\sE)$ of dimension $n$, the intersection number $\xi_\pi^{n-i}\cdot\pi^*D_1 \cdots \pi^*D_i$ is expressed in terms of intersections between $s_1(\sE), \dots , s_r(\sE)$ and $D_1 , \dots ,D_i$ for $0 \leq i \leq n$.

In this manner Segre classes and Chern classes are suitable to describe intersection number with the tautological divisor.
In some cases, however, it is not comfortable to study the geometry of a projectivised vector bundle $\bP(\sE)$ with Segre classes and Chern classes because the classes vary if we twist the bundle with a line bundle.
To avoid this, we use the relative anticanonical divisor $-K_\pi$ instead of the tautological divisor $\xi_\pi$.
The relative anticanonical divisor $-K_\pi$ or the ``\emph{normalized hyperplane class} ${-K_{\pi}}/{r}$'' of a projectivised bundle is used effectively for the first time in Miyaoka's work  
\cite{Miy87} (cf \cite{Yas11}).
Basically the same one of the following definition is also included in \cite[\S6.b]{Nak04}.

\subsection{Definition of $d_i(\sE)$ and $\Delta_i (\sE)$}
Let $Y$ be a smooth projective variety of positive dimension and $\sE$ a vector bundle of rank $r$ on $Y$.
Set $X \coloneqq \bP (\sE)$ and let $\pi \colon X \to Y$ be the natural projection.
We will denote by $\xi_\pi$ the class of the tautological divisor on $\bP(\sE)$.
Then we have $-K_\pi = r\, \xi_\pi - \pi^* c_1 (\sE)$, where $-K_\pi$ is  the relative anticanonical divisor for $\pi$. Let $n$ be the dimension of $X$.

By the definition of Segre classes, we have
\begin{align}\label{eq:1}
  \pi_* \left((-K_\pi)^{r-1+i}\right) = r^{r-1} \sum^{i}_{j=0} \binom{r-1+i}{r-1+j}r^j\,s_j(\sE)s_1(\sE)^{i-j}.
\end{align}
Motivated by this, we define the classes $d_i(\sE)$ and $\Delta_i (\sE)$ as follows:
\begin{definition}
Let the notation be as above.
\begin{enumerate}
 \item
 Set
 \[
 d_i (\sE) \coloneqq \sum^{i}_{j=0} \binom{r-1+i}{r-1+j}r^j\,s_j(\sE)s_1(\sE)^{i-j}.
 \]

 \item
 Set
 \[
 d_t(\sE) \coloneqq \sum^{\infty}_{i=0} d_i(\sE) t^i
 \]
 and
 \[
 \Delta_t(\sE) \coloneqq d_{-t}(\sE)^{-1}.
 \]
 Then $\Delta _i(\sE)$ is defined to be the $i$-th coefficient of $\Delta_t(\sE)$.
\end{enumerate}
\end{definition}

\begin{remark}\label{rem:di}\hfill
\begin{enumerate}
 \item \label{rem:di1}
 We have $\pi_* \left((-K_\pi)^{r-1+i} \right) = r^{r-1} d_i(\sE)$ by \eqref{eq:1}.

 \item \label{rem:di2} By \ref{rem:di1}, we have the following  for $D_1$, $D_2 \in N^1 (Y)$:
 \[
 (-K_\pi + \pi^* D_1)^i\cdot \pi^*D_2^{n-i} = r^{r-1}\sum^i_{k=0} \binom{i}{i-k} d_{k+1-r}(\sE)\cdot D_1^{i-k} D_2^{n-i}.
 \]
 
\item \label{ex:dD}
For later usage, we write down the first few $d_i(\sE)$ explicitly:
\hfill
\begin{enumerate}[label=(\alph*)]
 \item $d_0(\sE) =1$,
 \item $d_1(\sE) = 0$,
 \item $d_2(\sE) = \dfrac{r(r-1)}{2}c_1(\sE)^2-r^2\,c_2(\sE) = r\varDelta$, where $\varDelta$ is the discriminant of the vector bundle $\sE$,
 \item $d_3(\sE) = \dfrac{r(r-1)(r-2)}{3}c_1(\sE)^3-r^2(r-2)\,c_1(\sE)c_2(\sE) + r^3\, c_3(\sE)$.
\end{enumerate}
 
\item \label{rem:di3} By definition, we have
\begin{align}\label{eq:Delta}
 \Delta_i(\sE) = \sum_{\substack{j_1 + \cdots + j_k = i, \\  j_l>0}} (-1)^{i-k} d_{j_1}(\sE)\cdots d_{j_k}(\sE).
\end{align}
Hence the first few $\Delta_{i} (\sE)$ are written down explicitly as follows:
\begin{enumerate}[label=(\alph*)]
 \item $\Delta_0(\sE) = 1$,
 \item $\Delta_1(\sE) = 0$,
 \item $\Delta_2(\sE) = -d_{2}(\sE)$,
 \item $\Delta_{3}(\sE) = d_{3}(\sE)$,
 \item $\Delta_{4}(\sE) = -d_{4}(\sE)+d_{2}(\sE)^{2}$,
 \item $\Delta_{5}(\sE) = d_{5}(\sE)-2\,d_{2}(\sE)d_3(\sE)$.
\end{enumerate}
\end{enumerate}
\end{remark}

We establish a vanishing of $\Delta_i(\sE)$ and ``Grothendieck's relation'' for $-K_\pi$ in the next proposition.
\begin{proposition}\label{prop:vanishing}
$\Delta_i(\sE) = 0 $ for $i>r$ and
\[
\sum_{i=0}^{r} (-1)^i  (-K_\pi)^{r-i} \pi ^* \Delta_i (\sE)=0.
\]
\end{proposition}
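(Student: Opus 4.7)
The plan is in two steps: first, derive a closed-form expression for $\Delta_t(\sE)$ as a polynomial of degree at most $r$ in $t$; second, use the elementary identity $-K_\pi+\pi^*c_1(\sE)=r\xi_\pi$ to reduce the second equation to the classical Grothendieck relation on $\bP(\sE)$.

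For the first step, I would compute $d_t(\sE)$ in closed form. Interchanging the order of summation in the definition and applying the identity $\sum_{k\geq 0}\binom{n+k}{n}x^k=(1-x)^{-(n+1)}$ with $n=r-1+j$, the series collapses to
\[
d_t(\sE)=(1+s_1(\sE)\,t)^{-r}\cdot s_u(\sE), \qquad u=\frac{rt}{1+s_1(\sE)\,t},
\]
where $s_u(\sE)=\sum_j s_j(\sE)\,u^j$ is the total Segre series. Combining this with the standard inversion $s_u(\sE)\cdot c_{-u}(\sE)=1$ then yields
\[
d_t(\sE)=\Bigl(\sum_{j=0}^r(-1)^j r^j\, c_j(\sE)\,t^j(1+s_1(\sE)\,t)^{r-j}\Bigr)^{-1}.
\]
Hence $\Delta_t(\sE)=d_{-t}(\sE)^{-1}=\sum_{j=0}^r r^j\, c_j(\sE)\,t^j(1-s_1(\sE)\,t)^{r-j}$ is a polynomial of degree at most $r$ in $t$ with coefficients in $H^*(Y)$, which forces $\Delta_i(\sE)=0$ for $i>r$.

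For the second step, expanding $(1-s_1(\sE)\,t)^{r-j}$ and collecting coefficients of $t^i$ via the binomial identity $\sum_k\binom{r-j}{k}A^{r-j-k}s_1^k=(A+s_1)^{r-j}$ (with $A=-K_\pi$) gives
\[
\sum_{i=0}^r(-1)^i(-K_\pi)^{r-i}\pi^*\Delta_i(\sE)=\sum_{j=0}^r(-r)^j\pi^*c_j(\sE)\bigl(-K_\pi+\pi^*c_1(\sE)\bigr)^{r-j}.
\]
The crucial substitution $-K_\pi+\pi^*c_1(\sE)=r\xi_\pi$ then reduces the right-hand side to $r^r\sum_{j=0}^r(-1)^j\pi^*c_j(\sE)\,\xi_\pi^{r-j}$, which vanishes by the classical Grothendieck relation on $\bP(\sE)$.

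The main obstacle is careful sign bookkeeping, particularly because the paper uses a non-standard sign convention for odd Segre classes, so that the interplay between $(-1)^j$, $(-r)^j$, and the substitutions $t\mapsto -t$, $u\mapsto -u$ must be tracked explicitly. Conceptually, the statement becomes more transparent through twist-invariance: both $d_i(\sE)$ and $\Delta_i(\sE)$ are unchanged under $\sE\mapsto\sE\otimes L$ (since $-K_\pi$ is), and they compute Chern-type classes of a formal trace-free twist $\sE\otimes(\det\sE)^{-1/r}$. Under such a formal twist $-K_\pi/r$ plays the role of the new tautological class, and the proposition becomes simply the classical Grothendieck relation for the trace-free bundle.
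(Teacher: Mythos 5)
Your argument is correct, and it rests on the same two pillars as the paper's proof --- the explicit polynomial expression $\Delta_i(\sE)=\sum_{k}(-1)^{i-k}\binom{r-k}{i-k}r^k c_k(\sE)c_1(\sE)^{i-k}$ together with the substitution $-K_\pi+\pi^*c_1(\sE)=r\,\xi_\pi$ reducing everything to the classical Grothendieck relation --- but it reaches the first pillar by a genuinely different route. The paper writes down the candidate expression $\widetilde\Delta_i(\sE)$ ex nihilo, verifies $\sum_i(-1)^i(-K_\pi)^{r-i}\pi^*\widetilde\Delta_i(\sE)=0$ by means of the combinatorial orthogonality $\sum_{k\leq i\leq j}(-1)^{i+j-k}\binom{r-i}{r-j}\binom{r-k}{i-k}=0$ for $k\neq j$, and only afterwards identifies $\widetilde\Delta_i$ with $\Delta_i$ by pushing that relation forward against powers of $-K_\pi$ to recover the defining recursion $\sum_i(-1)^i d_{m+1-i}(\sE)\widetilde\Delta_i(\sE)=0$. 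You instead compute the generating function $d_t(\sE)$ in closed form and invert it, which \emph{produces} the polynomial formula for $\Delta_t(\sE)$ (hence the vanishing for $i>r$) with no guessing, after which the displayed identity is a one-line substitution; what you give up is the self-contained combinatorial verification, trading it for formal power-series manipulations (all legitimate here, since the substituted series $u=rt/(1+s_1(\sE)t)$ has zero constant term). One point you were right to flag: your closed form $d_t(\sE)=(1+s_1(\sE)t)^{-r}s_u(\sE)$ is the one consistent with the characterization $d_i(\sE)=r^{1-r}\pi_*\left((-K_\pi)^{r-1+i}\right)$ and with the explicit values of $d_2(\sE)$, $d_3(\sE)$ listed in Remark~\ref{rem:di}; a literal reading of the displayed sum defining $d_i(\sE)$ would instead give $(1-s_1(\sE)t)$, because that display appears to omit a factor $(-1)^{i-j}$ (as one checks already for $r=1$ or from $d_1(\sE)=0$). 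Your sign bookkeeping silently works from the pushforward characterization, which is the correct reading.
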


\begin{proof}
Set 
\[
\widetilde\Delta_i (\sE) \coloneqq
\begin{dcases}
 \sum^{i}_{k=0} (-1)^{i-k}\binom{r-k}{i-k}r^k c_k(\sE)c_1(\sE)^{i-k} &\text{if $i \leq r$,}\\
 0 & \text{if $i>r$}.
\end{dcases}
\]

Note that
\begin{align*}
a_{k,j} \coloneqq
 \sum_{k\leq i \leq j}  (-1)^{i+j-k} \binom{r-i}{r-j} \binom{r-k}{i-k} =
  \begin{cases}
   (-1)^j & \text{if $k=j$,} \\
        0 & \text{otherwise.} 
  \end{cases}
\end{align*}

By a direct calculation, we have
\begin{align*}
&\sum_{i=0}^{r} (-1)^i  (-K_\pi)^{r-i} \pi ^* \widetilde\Delta_i (\sE) \\
&=\sum_{0 \leq k \leq j \leq r} a_{k,j} r^{r-j+k} \xi_\pi ^{r-j} \pi^* \left(c_1(\sE)^{j-k} c_k(\sE)\right) \\
&=r^r\sum_{i=0}^{r} (-1)^i \xi_\pi^{r-i} \pi ^* c_i (\sE).
\end{align*}
Hence it is zero by the usual Grothendieck relation.

Therefore, for every nonnegative integer $m$, we have
\[
\sum_{i=0}^{r} d_{m+1-i}(\sE) \cdot (-1)^i \widetilde\Delta_i (\sE)=0.
\]
This implies that $\Delta_i(\sE) = \widetilde\Delta_i(\sE)$.
This completes the proof.
\end{proof}

\begin{remark}\label{ex:r=3}
By the above proposition and Remark~\ref{rem:di}~\ref{rem:di3}, $d_i(\sE)$ for $i>r$ is described by $d_2(\sE) \dots, d_r(\sE)$  (note that $d_1(\sE)=0$).
For example, if $r=3$, we have 
\[
d_4(\sE) = d_2(\sE) ^2 \text{ and }d_5(\sE) = 2\,d_2(\sE) \cdot d_3(\sE).
\]
\end{remark}

\subsection{Slopes of Fano bundles}\label{subsec:slope}
In this subsection, we assume that $\rho_Y =1$ and $\sE$ is a Fano bundle, i.e.\ $\bP(\sE)$ is a Fano manifold.
Then $Y$ is also a Fano manifold by \cite[Theorem~1.6]{SW90} or \cite[Corollary~2.9]{KMM92a}.
We write $\Pic (Y) = \bZ \, H_Y$ with the ample generator $H_Y$.

\begin{definition}{\cite[Definition~2.1]{MOS14}}\label{def:slope}
 The \emph{slope} $\tau$ for the pair $(Y,\sE)$ is the real number $\tau$ such that $-K_\pi + \tau\, \pi^*H_Y$ is nef but not ample.
\end{definition}

Then, by \cite[Proposition~2.4]{MOS14}, \cite[Corollary~2.8]{KMM92a}, the Kawamata rationality theorem and the Kawamata-Shokurov base point free theorem \cite{KMM87,KM98}, we have the following:
\begin{proposition}[{\cite[Proposition~2.4 and Remark~2.9]{MOS14}}] \label{prop:tau}\hfill
\begin{enumerate}
 \item $\tau = 0$ if and only if $X \simeq \bP^{r-1}\times Y$.
 \item $0 \leq \tau < r_Y $, where $r_Y$ is the Fano index of $Y$.
 \item $\tau \in \bQ$.
 \item $-K_\pi + \tau \, \pi^*H_Y$ is semiample and defines another contraction $p \colon X \to Z$.
\end{enumerate}
\end{proposition}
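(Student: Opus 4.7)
The plan is to extract all four statements from the Mori-theoretic structure of the Fano manifold $X=\bP(\sE)$ using $\rho_X=\rho_Y+1=2$. By the cone theorem applied to the Fano manifold $X$, $\cNE(X)$ is a rational polyhedral cone, and since $\rho_X=2$ it has exactly two extremal rays. One, $R_\pi$, is generated by the class of a line $[\ell]$ in a $\pi$-fiber $F\simeq\bP^{r-1}$, and is contracted by $\pi$. The other, $R_p$, must exist since $\rho_X>1$; denote by $[\gamma]$ a generator. Since $\gamma\not\in R_\pi$ while $\pi^*H_Y$ vanishes exactly on $R_\pi$, one has $\pi^*H_Y\cdot\gamma>0$. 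The contraction theorem provides a morphism $p\colon X\to Z$ contracting $R_p$, which will be the one in (4).

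Next, I read off the slope and the bounds (2)--(3). The divisor $-K_\pi+t\,\pi^*H_Y$ is $\pi$-relatively ample for every real $t$ (since $-K_\pi$ restricts to $-K_{\bP^{r-1}}$ on each fiber), so its intersection with $R_\pi$ is positive for any $t$. Its nefness therefore reduces to the single inequality $(-K_\pi+t\,\pi^*H_Y)\cdot\gamma\geq 0$, and the slope $\tau$ is the minimal such $t$, namely
\[
\tau=\frac{K_\pi\cdot\gamma}{\pi^*H_Y\cdot\gamma}\in\bQ,
\]
a ratio of integer intersection numbers, giving (3). Ampleness of $-K_X=-K_\pi+r_Y\,\pi^*H_Y$ forces $-K_X\cdot\gamma>0$, hence $\tau<r_Y$. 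The lower bound $\tau\geq 0$ (equivalently $-K_\pi\cdot\gamma\leq 0$) is the subtle point: if it failed, $-K_\pi$ would be positive on both $R_\pi$ and $R_p$ and thus ample by Kleiman's criterion, which turns out to be incompatible with the existence of a projective bundle structure over a positive-dimensional Fano base with $\rho_Y=1$. This is the step that genuinely uses the Fano bundle hypothesis, and I expect it to be the main obstacle.

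Finally, I deduce (4) and (1). For (4), apply the Kawamata--Shokurov base point free theorem to $D:=-K_\pi+\tau\,\pi^*H_Y$: $D$ is nef and $aD-K_X$ is a sum of a nef and an ample divisor for every $a\geq 0$, hence ample and in particular nef and big; the theorem yields that $|mD|$ is basepoint-free for $m\gg 0$, defining a morphism that must coincide with $p$ up to Stein factorization. For (1), one direction is immediate since the second projection of $Y\times\bP^{r-1}$ realises $\tau=0$. Conversely, $\tau=0$ means $-K_\pi$ is itself nef and semiample, defining $p$; since $-K_\pi$ is very ample on each $\pi$-fiber, $p$ embeds each such fiber, so a dimension count forces $p$ to be another $\bP^{r-1}$-bundle over $Z$ whose fibers are transverse to those of $\pi$, and the pair $(\pi,p)$ exhibits $X\simeq Y\times Z$ with $Z\simeq\bP^{r-1}$.
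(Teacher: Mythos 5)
The paper itself gives no proof of Proposition~\ref{prop:tau} --- it is imported from \cite{MOS14} together with \cite{KMM92a}, the Kawamata rationality theorem and the base point free theorem --- but your reconstruction follows the intended route: on the Fano manifold $X$ with $\rho_X=2$ the cone $\cNE(X)$ is spanned by the ray $R_\pi$ of lines in $\pi$-fibers and one further extremal ray $R_p=\bR_{\geq 0}[\gamma]$; the slope is the nef threshold $\tau=K_\pi\cdot\gamma/(\pi^*H_Y\cdot\gamma)\in\bQ$, which gives (3); the bound $\tau<r_Y$ follows from ampleness of $-K_X=-K_\pi+r_Y\,\pi^*H_Y$; and (4) follows from base point freeness because $aD-K_X$ is ample for $D=-K_\pi+\tau\,\pi^*H_Y$ and every $a\geq 0$. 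These parts are correct.

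Two steps are genuinely missing. First, the lower bound $\tau\geq 0$, which you explicitly leave open, does follow from your own reduction: if $\tau<0$ then $-K_\pi$ is positive on both extremal rays, hence ample by Kleiman's criterion; but $\pi_*\bigl((-K_\pi)^{r}\bigr)=r^{r-1}d_1(\sE)=0$ by Remark~\ref{rem:di}, so $(-K_\pi)^{r}\cdot\pi^*H_Y^{\dim Y-1}=0$, which is impossible for an ample divisor (equivalently, the normalized $\bQ$-twist $\sE\langle -\tfrac1r c_1(\sE)\rangle$ would be ample with vanishing first Chern class, hence of degree zero on every curve in $Y$). Second, your argument for the implication $\tau=0\Rightarrow X\simeq\bP^{r-1}\times Y$ does not work as written: finiteness of $p$ on $\pi$-fibers only gives $\dim Z\geq r-1$, you never bound $\dim Z$ from above, and the assertion that $p$ is ``another $\bP^{r-1}$-bundle'' is inconsistent with the product you are aiming for (the $p$-fibers of $\bP^{r-1}\times Y\to\bP^{r-1}$ are copies of $Y$, not of $\bP^{r-1}$). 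Even after showing $\dim Z=r-1$ (again via $(-K_\pi)^{r}\cdot\pi^*H_Y^{\dim Y-1}=0$), one still has to prove that the finite surjective morphism $\pi\times p\colon X\to Y\times Z$ has degree one, which a dimension count cannot do. The standard argument is of a different nature: $\tau=0$ forces $\sE\otimes\sE^*$ to be a nef vector bundle with trivial first Chern class, hence numerically flat, hence trivial on the simply connected Fano base $Y$ with $H^1(Y,\cO_Y)=0$ by \cite{DPS94}; then $\sE\simeq L^{\oplus r}$ for a line bundle $L$ and $X\simeq\bP^{r-1}\times Y$.
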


Then we have $\kappa (-K_\pi + \tau \, \pi^*H_Y) = \dim Z$, where $\kappa (-K_\pi + \tau \, \pi^*H_Y)$ is the Kodaira dimension of $-K_\pi + \tau \, \pi^*H_Y$.
In particular $(-K_\pi + \tau \, \pi^*H_Y)^i\cdot \pi^*H_Y^{n-i} = 0$ for $i> \kappa (-K_\pi + \tau \, \pi^*H_Y)$.
Hence we have the following  by  Remark~\ref{rem:di}~\ref{rem:di2}:

\begin{proposition}\label{prop:eq}
For $i> \kappa (-K_\pi + \tau \,\pi^*H_Y)$, we have
\[
\sum^i_{k=0} \binom{i}{i-k} d_{k+1-r}(\sE)\cdot H_Y^{n-k}\, \tau^{i-k} =0.
\]
\end{proposition}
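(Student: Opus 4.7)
The proof plan is entirely direct: the proposition is just a numerical translation of the vanishing $(-K_\pi + \tau \pi^*H_Y)^i \cdot \pi^*H_Y^{n-i} = 0$ via Remark~\ref{rem:di}~\ref{rem:di2}. No ingenuity is needed once the two inputs are placed side by side.

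First I would record the vanishing. By Proposition~\ref{prop:tau}~(4), the divisor $L \coloneqq -K_\pi + \tau\,\pi^*H_Y$ is semiample and defines the contraction $p\colon X\to Z$. Up to a positive rational multiple we may write $L \equiv p^* A$ for an ample $\bQ$-divisor $A$ on $Z$ (equivalently, some multiple of $L$ is $p^*$ of an ample integral divisor). The Kodaira dimension equality $\kappa(L)=\dim Z$ stated right before the proposition then forces $L^{i} = p^*(A^i) = 0$ as a class on $X$ for every $i>\dim Z = \kappa(L)$, since $A^i = 0$ on $Z$ for purely dimensional reasons. Intersecting with $\pi^*H_Y^{n-i}$ gives
\[
(-K_\pi + \tau\,\pi^*H_Y)^{i}\cdot \pi^*H_Y^{n-i}=0 \qquad \text{for } i>\kappa(L).
\]

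Next I would expand this vanishing using Remark~\ref{rem:di}~\ref{rem:di2}. Applying that formula with $D_1 = \tau\,H_Y$ and $D_2 = H_Y$, and using that $H_Y^{(i-k)+(n-i)} = H_Y^{n-k}$, yields
\[
(-K_\pi + \tau\,\pi^*H_Y)^{i}\cdot \pi^*H_Y^{n-i}
= r^{r-1}\sum_{k=0}^{i} \binom{i}{i-k}\,d_{k+1-r}(\sE)\cdot H_Y^{n-k}\,\tau^{i-k}.
\]
Combining this with the vanishing from the previous step and dividing through by $r^{r-1}\neq 0$ gives exactly the asserted identity.

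There is essentially no obstacle: the only things to check are that Remark~\ref{rem:di}~\ref{rem:di2} applies verbatim (it does, once one substitutes $D_1 = \tau H_Y$, which is an honest $\bR$- or $\bQ$-divisor class by Proposition~\ref{prop:tau}~(3)) and that the degree bookkeeping $(k+1-r)+(n-k) = n-r+1 = \dim Y$ matches, so that each term $d_{k+1-r}(\sE)\cdot H_Y^{n-k}$ is a genuine $0$-cycle on $Y$. Both are routine.
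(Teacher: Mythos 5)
Your proof is correct and follows essentially the same route as the paper: the statement is obtained by combining the vanishing $(-K_\pi + \tau\,\pi^*H_Y)^i\cdot \pi^*H_Y^{n-i}=0$ for $i>\kappa(-K_\pi+\tau\,\pi^*H_Y)=\dim Z$ with the expansion in Remark~\ref{rem:di}~\ref{rem:di2} applied to $D_1=\tau H_Y$ and $D_2=H_Y$. Your extra justification of the vanishing via $L\equiv p^*A$ for an ample $\bQ$-divisor $A$ on $Z$, and the degree check $(k+1-r)+(n-k)=\dim Y$, only make explicit what the paper leaves implicit.
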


\section{A characterization of Ottaviani bundle}\label{sec:char}
\subsection{Ottaviani bundle and the family of special planes on the five dimensional quadric}\label{subsec:example}
\begin{definition}\label{def:Ottaviani}
Let $\sE$ be a stable vector bundle of rank three on $\bQ^5$ with Chern classes $(c_1,c_2,c_3)=(2,2,2)$.
Ottaviani \cite[Section~3]{Ott88} shows that such a vector bundle exists and each of which  arises as a quotient of the dual of the spinor bundle on $\bQ^5$:
\[
0 \to \cO_{\bQ ^5} \to \sS^* \to \sE \to 0.
\]
In this paper we call this bundle \emph{Ottaviani bundle} and denote by ${X_0}$ the projectivised Ottaviani bundle $\bP(\sE)$.
\end{definition}

Set $Y \coloneqq \bQ^5$.
The surjection $\sS^* \to \sE$ gives an immersion of projectivised vector bundles $i \colon {X_0} \coloneqq \bP_Y(\sE) \to \bP_Y (\sS^*)$.
By the definition of the spinor bundle, there exists a smooth $\bP^2$-bundle $p' \colon \bP_Y (\sS^*) \to S_3 \simeq \bQ^6$, where $S_3$ is the spinor variety of type $B_3$.
Let $\bP(\sE) \xrightarrow{p} Z \xrightarrow{h} S_3$ be the Stein factorization of $p' \circ i$:
\[
\xymatrix{
         & {X_0}=\bP_Y(\sE) \ar[ld]_(0.5){\pi} \ar[rd]^(0.6){p} \ar@{^{(}->}[r]^(0.55){i} & \bP_Y (\sS^*) \ar[lld]_{\pi '} \ar[rd]^{p'} &                  \\
Y = \bQ ^5 &                                                            & Z \ar[r]^(0.4){h}                              & S_3 \simeq \bQ^6 \\
}
\]

We use the same notation as in Section~2 (e.g.\ $H_Y$ is the ample generator of $\Pic (Y)$).
Note that the Chern classes $(c_1,c_2,c_3,c_4)$ of $\sS ^*$ are $(2,2,2,0)$ and that $p$ is not an isomorphism  since $\dim {X_0} > \dim Z$.

\begin{theorem}\label{thm:notnef}
The following hold:
\begin{enumerate}
 \item $p$ is a $\bP^2$-bundle over the $5$-dimensional quadric $Z \simeq \bQ^5$.
In particular, ${X_0}$ satisfies Condition~\ref{cond:*}.
 \item The tangent bundle of ${X_0}$ is not nef.
\end{enumerate}
\end{theorem}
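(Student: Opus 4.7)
My plan for part (1) is to work inside the ambient flag variety $\bP_Y(\sS^*) \simeq B_3/P_{1,3}$, which carries two projective-bundle structures: the given $\pi'$ over $\bQ^5$ with $\bP^3$-fibres, and $p'$ over $S_3 \simeq \bQ^6$ with $\bP^2$-fibres. Since $\mathrm{rank}\,\sE = \mathrm{rank}\,\sS^* - 1$, the embedding $i$ realises $X_0$ as a divisor in $\bP_Y(\sS^*)$; the canonical section of $\cO_{\bP_Y(\sS^*)}(1)$ dual to $\cO \hookrightarrow \sS^*$ vanishes exactly along $X_0$, so $[X_0] = \xi_{\sS^*}$. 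Writing $\Pic(\bP_Y(\sS^*)) = \bZ\pi^*H_Y \oplus \bZ p'^*H_{S_3}$ and decomposing $\xi_{\sS^*} = a\pi^*H_Y + b\, p'^*H_{S_3}$, pairing with a line in a $\pi$-fibre forces $b = 1$ (since $\xi_{\sS^*}|_{\pi\text{-fibre}} = \cO_{\bP^3}(1)$ and the $\pi$-fibre embeds linearly in $S_3$ as a Schubert $\bP^3$). A spinor-representation computation then shows $a = 0$, equivalently that the tautological section $\sigma$ restricts to a nowhere-vanishing section of $\sS^*|_{P_s}$ on every isotropic plane $P_s \subset \bQ^5$, so the associated quotient line bundle on $P_s$ is trivial. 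With $\xi_{\sS^*} = p'^*H_{S_3}$ in hand, $X_0$ is the $p'$-preimage of the hyperplane section $Z \subset S_3 \simeq \bQ^6$ cut out by $\sigma$, whence $Z \simeq \bQ^5$ by Lefschetz and $p \colon X_0 \to Z$ is a $\bP^2$-bundle. Condition~\ref{cond:*} follows at once since both $\pi$ and $p$ are smooth $\bP^2$-fibrations over rational homogeneous targets.

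For part (2), I would compute the slope $\tau$ for $(\bQ^5, \sE)$ via Proposition~\ref{prop:eq} and then exhibit a curve on which $-K_\pi$ is negative. Using Remark~\ref{rem:di}~\ref{ex:dD} with $(c_1, c_2, c_3) = (2H_Y, 2H_Y^2, H_Y^3)$ and $H_Y^5 = 2$ gives $d_2(\sE) = -6H_Y^2$ and $d_3(\sE) \cdot H_Y^2 = 14$, while Remark~\ref{ex:r=3} yields $d_4(\sE) = d_2(\sE)^2$, hence $d_4(\sE) \cdot H_Y = 72$. The vanishing in Proposition~\ref{prop:eq} with $i = 6 > \dim Z = 5$ then reduces to
\[
5\tau^4 - 30\tau^2 + 14\tau + 12 = 0,
\]
whose unique root in $[0, r_{\bQ^5}) = [0, 5)$ is $\tau = 2$. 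Hence $-K_\pi + 2\pi^*H_Y = 3\xi_\pi$ defines the contraction $p$, so $\xi_\pi$ is trivial on $p$-fibres; by part (1) each $p$-fibre $F \simeq \bP^2$ is $\pi$-isomorphic to a plane of $\bQ^5$. For a line $\ell \subset F$,
\[
-K_\pi \cdot \ell \;=\; 3(\xi_\pi \cdot \ell) - 2(\pi^*H_Y \cdot \ell) \;=\; -2,
\]
so $\det T_\pi = -K_\pi$ is not nef. By the nefness-propagation principle of Demailly--Peternell--Schneider (cf.\ \cite[Theorem~5.2]{DPS94} and its refinements in \cite{SW04,MOSW15}), which forces the relative tangent bundle of every smooth elementary contraction of a Fano manifold with nef tangent bundle to be nef, we conclude that $T_{X_0}$ cannot be nef, and in particular $X_0$ is not rational homogeneous.

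The main technical obstacle in part (1) is the vanishing $a = 0$, which ultimately reduces to a spinor-representation fact about the canonical section $\sigma \in H^0(\sS^*)$ on maximal isotropic planes in $\bQ^5$. In part (2) the slope computation is mechanical once the conventions of Section~\ref{sec:pre} are in place; the only delicate point is the invocation of the nefness-propagation principle for smooth contractions, needed to pass from the failure of nefness of $T_\pi$ to that of $T_{X_0}$.
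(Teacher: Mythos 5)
Your argument for part (1) is essentially correct and takes a slightly different route from the paper: where the paper computes $(-K_\pi+2\pi^*H_Y)^6\cdot\pi^*H_Y=0$ and $(-K_\pi+2\pi^*H_Y)^5\cdot\pi^*H_Y^2\neq 0$ to get $\dim Z=5$ and the immersion $h$, you identify $[X_0]=\xi_{\pi'}=p'^*\cO_{\bQ^6}(1)$ and realise $X_0$ directly as $p'^{-1}(Z)$ for a hyperplane section $Z\subset\bQ^6$. Two small repairs: the vanishing $a=0$ is not a separate ``spinor-representation computation'' but the defining isomorphism $\cO_{\bP(\sS^*)}(1)\simeq p'^*\cO_{\bQ^6}(1)$ (which is also where the paper starts), and smoothness of $Z$ (hence $Z\simeq\bQ^5$) should be deduced from smoothness of $X_0=p'^{-1}(Z)$ and the submersivity of $p'$, not from Lefschetz.

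Part (2) has a fatal gap. The ``nefness-propagation principle'' you invoke --- that nefness of $T_X$ forces nefness of the relative tangent bundle $T_{X/Y}$, hence of $-K_\pi=\det T_{X/Y}$, for a smooth elementary contraction --- is false and is not what \cite{DPS94,SW04,MOSW15} assert. Those results give smoothness of the contraction and nefness of $T_F$ for its \emph{fibres}, i.e.\ nefness of $T_{X/Y}$ only along curves contained in $\pi$-fibres; your curve $\ell$ lies in a $p$-fibre and is a multisection of $\pi$. The principle already fails for homogeneous spaces: on the ambient $\bP_{\bQ^5}(\sS^*)$ the identical computation gives $-K_{\pi'}\cdot\ell=-2$ on a line in a $p'$-fibre, and on $\bP(T_{\bP^2})\to\bP^2$ one gets $-K_\pi\cdot\ell=-1$ on a fibre of the other contraction. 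So $-K_\pi\cdot\ell=-2$ carries no information about $T_{X_0}$. Worse, no extremal rational curve can detect the non-nefness: on a line $\ell$ in a $p$-fibre, $T_{X_0}|_\ell$ is an extension of the trivial bundle $p^*T_Z|_\ell$ by the nef bundle $T_p|_\ell$ and is therefore nef --- this is precisely why $X_0$ answers Problem~\ref{prob:CP} for $q=1$ in the negative. The missing ingredient is Ottaviani's sequence $0\to\sC(1)\to\sE\to\cO_{\bQ^5}(1)\to 0$: the quotient gives a section $S\simeq\bQ^5$ of $\pi$ with $N_{S/X_0}\simeq\sC^*\simeq\sC(1)$, which restricts to $\cO_{\bP^1}(-1)\oplus\cO_{\bP^1}(2)$ on special lines and so is not nef \cite[Theorem~3.5]{Ott90}; since $N_{S/X_0}$ is a quotient of $T_{X_0}|_S$, the tangent bundle is not nef. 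A further minor slip: $\tau=2$ is not the unique root of $5\tau^4-30\tau^2+14\tau+12$ in $[0,5)$ (the cofactor $5\tau^3+10\tau^2-10\tau-6$ has a real root near $1.03$); you must use the rationality of $\tau$ from Proposition~\ref{prop:tau} to exclude it, although the slope is obtained more directly from the geometry of $p'$, as in the paper.
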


\begin{proof}
By the definition of the spinor bundle, we have $p'^* \cO_{\bQ^6}(1) \simeq \cO_{\bP (\sS^*)} (1)$ and hence the vector bundle $\sS^*$ is nef but not ample.
Hence the slope for the pair $(Y,\sS^*)$ is two and $p'$ is defined by the semiample divisor $-K_{\pi '}+ 2 \pi'^*H_Y$.
Therefore the morphism $p$ is defined by the semiample divisor $(-K_{\pi'}+ 2 \pi'^*H_Y)|_{{X_0}}$, which is equivalent to $4\, \xi_\pi$.

Because $\dim Z < \dim {X_0}$, the divisor $-K_{\pi}+ 2 \pi^*H_Y = 3 \xi_\pi$ is nef but not ample.
This implies that $\sE$ is a Fano bundle whose slope $\tau$ is two.

By a direct calculation using Remark~\ref{rem:di}~\ref{rem:di2}, Remark~\ref{rem:di}~\ref{ex:dD} and Remark~\ref{ex:r=3}, we have $(-K_\pi + 2 \pi^*H_Y)^6\cdot \pi^*H_Y = 0$
and $(-K_\pi + 2 \pi^*H_Y)^5\cdot \pi^*H_Y^2 \neq 0$.
Hence we have $\dim Z =5$, $h$ is an immersion and $p$ is the base change of $p'$ over $Z$.
Since $p'$ is a $\bP^2$-bundle, $p$ is also a $\bP^2$-bundle.
Furthermore $Z$ is a linear section $\bQ^5$ of $\bQ^6$ since the normal bundle of ${X_0}$ in $\bP(\sS^*)$ is $\cO_{\bP (\sS^*)} (1) |_{{X_0}} \simeq p'^* \cO_{\bQ^6}(1)|_{X_0}$.
Hence ${X_0}$ is a Fano $7$-fold with Picard number two which satisfies Condition~\ref{cond:*}:
\[\xymatrix{
               & {X_0} \ar[ld]_{\pi} \ar[rd]^{p} &                 \\
Y \simeq \bQ^5 &                               & Z \simeq \bQ^5. \\
}\]

By \cite[Example~3.3]{Ott90}, there exists the following exact sequence on $Y = \bQ^5$:
\[
0 \to \sC (1) \to \sE \to \cO_{\bQ^5}(1) \to 0,
\]
where $\sC$ is the Cayley bundle on $\bQ^5$.
Then the surjection $\sE \to \cO_{\bQ^5}(1)$ gives a section $S \simeq \bQ^5 \subset {X_0}$ of $\pi$ with normal bundle $N_{S/{X_0}}\simeq \sC^* \simeq \sC (1)$, which is \emph{not} nef by \cite[Theorem~3.5]{Ott90}.
Hence the tangent bundle of ${X_0}$ is \emph{not} nef since the normal bundle $N_{S/{X_0}}$ is a quotient of the tangent bundle.
\end{proof}

\begin{remark}
In \cite{Pan13}, a smooth projective variety is called convex if  
\[
H^1(\mu ^* T_{X_0}) = 0
\]
for every morphism $\mu \colon \bP^1 \to X_0$, and Pandharipande proved that a convex, rationally connected smooth complete intersection is a homogeneous manifold.  
Note that ${X_0}$ is not convex in the sense of \cite{Pan13}.
Indeed the restriction of $\sC(1)$ on a special line in $\bQ^5 \simeq S$ is $\cO_{\bP^1}(-1)\oplus \cO_{\bP^1}(2)$ \cite[Theorem~3.5]{Ott90}.
Hence if we take a double cover of the special line, we have a morphism $\mu \colon \bP^1 \to X_0$ with $H^1(\mu ^* T_{X_0}) \neq 0$
\end{remark}

\begin{definition}[{\cite[Section~1]{Ott90}}]
Let $\bO$ be the complexified Cayley octonions, which is an algebra generated by $1$, $e_1$, $e_2$, \dots $e_7$ with the following relations:
(1) $e_i^2 =-1$,
(2) $e_i \cdot e_j= - e_j \cdot e_i$ for $i \neq j$,
(3) $e_1 \cdot e_2 = e_3$,
(4) $e_1 \cdot e_4 = e_5$,
(5) $e_1 \cdot e_7 = e_6$,
(6) $e_2 \cdot e_5 = e_7$,
(7) $e_2 \cdot e_4 = e_6$,
(8) $e_3 \cdot e_4 = e_7$,
(9) $e_3 \cdot e_6 = e_5$.

It is known that the automorphism group of $\bO$ is a semisimple group of type $G_2$, and the group acts on the variety of projectivised elements with null-square.
The equations for the variety of projectivised elements with null-square is
\[
x_0=\sum_{i=1}^{7}x_i^2 =0.
\]
Hence it is naturally isomorphic to the five dimensional quadric $\bQ^5$.

The \emph{special plane on $\bQ^5$} through a point $y \in \bQ^5$ is defined to be $\Pi_{y} \coloneqq \{x \in \bQ ^5 \mid\, x\cdot y =0  \}$.

Set $X\coloneqq \{(x,y) \in \bQ^5 \times \bQ ^5 \mid\, x\cdot y =0  \}$ and let $p_1 \colon X \to Y \coloneqq \bQ^5$ be the first projection and $p_2 \colon X \to Z \coloneqq \bQ^5$ the second projection.
We call this $X$ the \emph{family of special planes on $\bQ ^5$}.
\end{definition}

Then every $p_2$-fiber over $y \in Z \simeq \bQ^5$ defines the special plane $\Pi_y \subset Y \simeq \bQ^5$.
Hence $p_2 \colon X \to Z$ is a $\bP^2$-bundle.
By the symmetry, $p_1 \colon X \to Y$ is also a $\bP^2$-bundle.
Hence we have the following:
\begin{proposition}\label{prop:fam}
The family of special planes $X$ satisfies Condition~\ref{cond:*} and admits two different $\bP^2$-bundle over $5$-dimensional quadric $\bQ^5$.
\end{proposition}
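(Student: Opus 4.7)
The plan is to consolidate the observations made just before the proposition and to spell out the few additional Mori-theoretic facts required.

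First I would record that $X$ is smooth and compute its Picard number. The preceding paragraph identifies $p_{2} \colon X \to \bQ^{5}$ as a $\bP^{2}$-bundle over the smooth base $\bQ^{5}$, so $X$ is smooth as the total space of a projective bundle over a smooth base, and
\[
\rho(X) = 1 + \rho(\bQ^{5}) = 2.
\]

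Next I would identify $p_{1}$ and $p_{2}$ as the two distinct elementary Mori contractions of $X$. Each $p_{i}$ is a surjection onto a variety of Picard number one with positive-dimensional connected fibers, hence has relative Picard number one and is therefore the contraction of a single extremal ray of $\cNE(X)$. Since $\rho(X) = 2$, these two rays span the whole cone and $p_{1}$, $p_{2}$ exhaust the elementary contractions of $X$. Distinctness is immediate from the description of the fibers: the fiber of $p_{1}$ through $(x,y) \in X$ is $\{x\} \times \Pi_{x}$, while the fiber of $p_{2}$ is $\Pi_{y} \times \{y\}$, and these are different subvarieties for general $(x,y)$.

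To finish I would verify Condition~\ref{cond:*}. Any sequence of elementary contractions starting at $X$ begins with $p_{1}$ or $p_{2}$, both of which are smooth $\bP^{2}$-fibrations and hence rational homogeneous fibrations. The resulting target is $\bQ^{5}$, a rational homogeneous manifold of Picard number one whose only elementary contraction is the map to a point; this is smooth with rational homogeneous fiber $\bQ^{5}$, hence again a rational homogeneous fibration. Thus every contraction appearing in any such sequence is of the required form.

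There is no serious obstacle in this argument: the geometric content (the construction of $X$ and the symmetry swapping $p_{1}$ and $p_{2}$) has already been carried out, so the remaining verification is essentially a bookkeeping step cataloguing the elementary contractions and their fibers.
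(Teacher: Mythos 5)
Your argument is correct and follows the same route as the paper, which simply observes that the $p_2$-fibers are the special planes $\Pi_y\simeq\bP^2$, invokes the symmetry $x\cdot y=0\iff y\cdot x=0$ to conclude that $p_1$ is likewise a $\bP^2$-bundle, and deduces the proposition; your version merely makes explicit the Mori-theoretic bookkeeping ($\rho(X)=2$, the two projections exhaust the elementary contractions, and $\bQ^5\to\mathrm{pt}$ closes every sequence) that the paper leaves implicit.
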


In the next subsection, we shall  show that the above two manifolds in Theorem~\ref{thm:notnef} and Proposition~\ref{prop:fam} are isomorphic to each other.

\subsection{A characterization of projectivised Ottaviani bundle}
\begin{theorem}\label{thm:char}
Let $X$ be a Fano $7$-fold with Picard number two which satisfies Condition~\ref{cond:*}.
Assume that $X$ has a smooth $\bP^2$-fibration $\pi \colon X \to Y$.
Then $X$ is isomorphic to $\bP^2 \times Y$ or ${X_0}$ as in Definition~\ref{def:Ottaviani}.

In particular two manifolds in Theorem~\ref{thm:notnef} and Proposition~\ref{prop:fam} are isomorphic to each other.
\end{theorem}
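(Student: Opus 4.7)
The plan is to identify the base $Y$, represent $X$ as a projective bundle over it, and then exploit the slope $\tau$ and the numerical relations of Section~\ref{sec:pre} to pin down the underlying rank-$3$ bundle as the Ottaviani bundle.

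First I would identify $Y$. Since $\rho_X = 2$ and $\pi$ is a smooth $\bP^2$-fibration, $\pi$ is an elementary extremal Mori contraction and hence $\rho_Y = 1$. Applying Condition~\ref{cond:*} to the sequence $X \xrightarrow{\pi} Y \to \mathrm{pt}$, the contraction $Y \to \mathrm{pt}$ is a rational homogeneous fibration whose unique fibre is $Y$ itself, so $Y$ is a rational homogeneous $5$-fold with Picard number one; the candidates are $\bP^5$, $\bQ^5$, and the $G_2$-variety $G_2/P_2$. Each is simply connected with vanishing Brauer group, so $\pi$ is actually a projective bundle $X = \bP_Y(\sE)$ for some rank-$3$ Fano bundle $\sE$, to which Proposition~\ref{prop:tau} applies.

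I would then analyze the slope $\tau$ of $(Y,\sE)$. If $\tau = 0$, Proposition~\ref{prop:tau}(1) immediately gives $X \cong \bP^2 \times Y$, completing the first alternative. Otherwise $\tau > 0$ and the semiample divisor $-K_\pi + \tau\,\pi^*H_Y$ defines a second contraction $p\colon X \to Z$; since $\rho_X = 2$ it is the other extremal ray, and by Condition~\ref{cond:*} it is a smooth rational homogeneous fibration. Because a smooth morphism with connected (in fact homogeneous) fibres cannot be birational unless it is an isomorphism, $p$ is of fibre type and $\dim Z < 7$.

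The crux is then a simultaneous determination of $Y$, $\tau$, and the Chern classes of $\sE$. I would substitute the explicit expressions $d_0 = 1$, $d_1 = 0$, $d_2 = 3\varDelta$, $d_3$ of Remark~\ref{rem:di}~\ref{ex:dD}, together with $d_4 = d_2^2$ and $d_5 = 2\,d_2 d_3$ from Remark~\ref{ex:r=3}, into the vanishings of Proposition~\ref{prop:eq} for each admissible $i > \dim Z$, and compare with the range $0 < \tau < r_Y$ from Proposition~\ref{prop:tau}(2). Combined with positivity constraints on the Chern classes of the Fano bundle $\sE$, I expect this to rule out $Y = \bP^5$ and $Y = G_2/P_2$ and to leave $Y = \bQ^5$ with $\tau = 2$, $\dim Z = 5$, and, after a unique normalizing twist, $(c_1,c_2,c_3) = (2,2,2)$. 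Stability of $\sE$ then follows because any destabilizing subsheaf would produce an extra Mori contraction of $X$ contradicting $\rho_X = 2$, so by Ottaviani's uniqueness in \cite{Ott88} the bundle $\sE$ must be the Ottaviani bundle and $X \cong X_0$. The final assertion of the theorem then follows by applying the classification to Proposition~\ref{prop:fam}: the family of special planes admits two distinct $\bP^2$-bundle structures over $\bQ^5$, so it is not a product and hence must be $X_0$. The main obstacle will be this elimination, since the bookkeeping between admissible slopes, the dimension of $Z$, and Chern-class positivity in the cases $\bP^5$ and $G_2/P_2$ is delicate and is not obviously tight from Proposition~\ref{prop:eq} alone.
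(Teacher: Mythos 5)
Your overall strategy (identify $Y$ as a rational homogeneous $5$-fold with Picard number one, write $X=\bP_Y(\sE)$ using triviality of the Brauer group, split off the case $\tau=0$, then use the slope together with the relations of Proposition~\ref{prop:eq} to pin down $(Y;c_1,c_2,c_3)$) is the same as the paper's, and the first two steps are carried out correctly. But there are genuine gaps at exactly the points you flag as delicate. First, to extract \emph{two} equations from Proposition~\ref{prop:eq} (the pair whose resultant drives the whole computation) you need $\dim Z\le 5$, not merely $\dim Z<7$; the paper excludes $\dim Z=6$ by noting that $p$ would then be a $\bP^1$-bundle and comparing Betti numbers of $X$ computed via $\pi$ and via $p$, which would force $b_4(Z)=2$ and $b_6(Z)=1$, contradicting hard Lefschetz. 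Second, the numerical analysis does \emph{not} by itself eliminate $\bP^5$ and $K(G_2)$: Lemma~\ref{lem:num} still leaves five candidates, including $(\bP^5;2,2,1)$, $(\bP^5;4,8,8)$ and $(K(G_2);2,6,6)$, and no positivity constraint on the Chern classes kills them. The missing idea is to use Condition~\ref{cond:*} a second time: since $\dim Z=5$, the contraction $p$ is itself a $\bP^2$-bundle over a rational homogeneous $5$-fold, so $X\simeq\bP_Z(\sF)$ with $(Z,\sF)$ subject to the same constraints, and comparing the two resulting $\bZ$-bases of $A^3(X)$ yields a unimodularity condition on an explicit $3\times3$ matrix which leaves only $(\bP^5;2,2,1)$ and $(\bQ^5;2,2,2)$; the former is then excluded by Sato's classification \cite{Sat85}.

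Third, your stability argument does not work: a destabilizing subsheaf of $\sE$ does not in general produce an extra Mori contraction of $\bP(\sE)$, so nothing contradicts $\rho_X=2$. The paper instead verifies stability directly, showing $H^0(\sE(-1))=0$ because $\xi_\pi$ lies in $\Psef(X)\setminus\Bigcone(X)$ (it defines the fibre-type contraction $p$), and $H^0(\sE^*)=0$ by the Sommese vanishing theorem \cite{Som78}, since $\sE$ is $2$-ample. Stability is essential, because Ottaviani's uniqueness applies only to \emph{stable} rank-three bundles on $\bQ^5$ with $(c_1,c_2,c_3)=(2,2,2)$; without it you cannot conclude $X\simeq X_0$. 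Your derivation of the final assertion from Proposition~\ref{prop:fam} is fine.
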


The rest of this section is occupied with our proof of Theorem~\ref{thm:char}.
Let $X$ be a manifold as in Theorem~\ref{thm:char}.
Then $Y$ is a $5$-dimensional rational homogeneous manifold with Picard number one by Condition~\ref{cond:*}.
By the classification of rational homogeneous 5-folds, we have $Y \simeq \bP ^5$, $\bQ ^5$ or $K(G_2)$, where $K(G_2)$ is the $5$-dimensional Fano contact homogeneous manifold of type $G_2$.
Since the Brauer group of $Y$ is trivial there exists a vector bundle $\sE$ over $Y$ such that $X \simeq \bP(\sE)$.
By Condition \ref{cond:*}, we have the other smooth elementary rational homogeneous fibration $p \colon X \to Z$ over some rational homogenous manifold $Z$:
\[
\xymatrix{
  & X \ar[ld]_{\pi} \ar[rd]^{p} &    \\
Y &                             & Z. \\
}\]
If the slope $\tau$ for the pair $(Y,\sE)$ is zero, then $X \simeq \bP^2 \times Y$ by Proposition~\ref{prop:tau}.

Hence, in the rest of this section we assume that the slope $\tau$ for the pair $(Y,\sE)$ is nonzero and we shall show that $X $ is isomorphic to ${X_0}$ as in Definition~\ref{def:Ottaviani}.

\begin{notation}\label{not:chow}
In any case, we have $A^i(Y)_\bZ \simeq \bZ$ for each $i$.
We fix an effective generator of $A^i(Y)_{\bZ}$ as follows:
$A^0(Y)_\bZ = \bZ \,[Y]$,
$A^1(Y)_\bZ = \bZ \,H_Y$,
$A^2(Y)_\bZ = \bZ \,\Sigma _Y$,
$A^3(Y)_\bZ = \bZ \,P_Y$,
$A^4(Y)_\bZ = \bZ \,\ell _Y$,
$A^5(Y)_\bZ = \bZ \,\text{\{point\}}$.
We sometimes identify each class in $A^i(Y)_\bZ$ with some integer.
Therefore there exist a triple of integers $(n_Y,m_Y,d_Y)$ which satisfies $H_Y^2 = n_Y\, \Sigma_Y$, $H_Y \cdot \Sigma _Y = m_Y\, P_Y$ and $H_Y^5 =d_Y$.

We will write $d_2(\sE) = a \cdot H_Y^2$ and $d_3(\sE) = b \cdot H_Y^3$ with rational numbers $a$ and $b$.
Note that $d_2(\sE) = n_Y\,a \in \bZ$ and $d_3(\sE) = n_Ym_Y\,b \in \bZ$.
\end{notation}

\begin{remark}\label{rem:nmd}
 We have the following:
\begin{enumerate}
 \item $(n_Y,m_Y,d_Y)=(1,1,1)$ if $Y \simeq \bP^5$,
 \item $(n_Y,m_Y,d_Y)=(1,2,2)$ if $Y \simeq \bQ^5$,
 \item $(n_Y,m_Y,d_Y)=(3,2,18)$ if $Y \simeq K(G_2)$.
\end{enumerate}
\end{remark}

\begin{lemma}
$\dim Z \leq 5$. 
\end{lemma}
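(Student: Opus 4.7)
The plan is: first, observe that $\dim Z \le 6$ is automatic; second, rule out $\dim Z = 6$ by combining the slope relation on $\ell_p$ with the intersection calculus of Section~\ref{sec:pre}.

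By Proposition~\ref{prop:tau}, the divisor $L \coloneqq -K_\pi + \tau\, \pi^* H_Y$ is semiample and defines $p$. Since $L$ is not ample, the contraction $p$ is not birational, so $\dim Z \le \dim X - 1 = 6$. Suppose for contradiction that $\dim Z = 6$. By Condition~\ref{cond:*}, the fibers of $p$ are one-dimensional rational homogeneous manifolds, hence $\simeq \bP^1$, so $p$ is a smooth $\bP^1$-fibration. Let $\ell_p$ be a general fiber. Because the extremal ray $R_p = \bR_{\ge 0}[\ell_p]$ differs from $R_\pi$, the curve $\ell_p$ cannot lie in any $\pi$-fiber $\simeq \bP^2$, so $e \coloneqq \pi^* H_Y \cdot \ell_p$ is a positive integer. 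Combining $L \cdot \ell_p = 0$ with $-K_X \cdot \ell_p = 2$ and the relative canonical formula $-K_X = -K_\pi + r_Y\, \pi^* H_Y$, I would extract the explicit value
\[
\tau \;=\; r_Y - \tfrac{2}{e}.
\]

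Next I would apply Remark~\ref{rem:di}\ref{rem:di2} together with Remark~\ref{ex:r=3} (using $r = 3$, so $d_4(\sE) = d_2(\sE)^2$ and $d_5(\sE) = 2\, d_2(\sE) d_3(\sE)$) to expand $L^7$ and $L^6 \cdot \pi^* H_Y$ as explicit polynomials in $\tau$, $a$, $b$, where $d_2(\sE) = a\, H_Y^2$ and $d_3(\sE) = b\, H_Y^3$ in the notation of Notation~\ref{not:chow}. Since $L = p^* L_Z$ with $L_Z$ ample on the $6$-dimensional $Z$, we have $L^7 = p^*(L_Z^7) = 0$, while $L^6 \cdot \pi^* H_Y = L_Z^6 \cdot (\pi^* H_Y \cdot \ell_p) = e \cdot L_Z^6 > 0$. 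This yields one polynomial equation plus one strict positivity condition in $(a,b,\tau)$.

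The main obstacle is then to derive a numerical contradiction from these two relations together with $\tau = r_Y - 2/e$, using the classification $Y \in \{\bP^5, \bQ^5, K(G_2)\}$ with the values of $(n_Y, m_Y, d_Y)$ given in Remark~\ref{rem:nmd}, and the integrality constraints $n_Y a, n_Y m_Y b \in \bZ$ from Notation~\ref{not:chow}. My approach would be to solve the polynomial equation for $b$ in terms of $\tau$ and $a$, substitute into the strict inequality, and then run through the finitely many admissible pairs $(Y, e)$ (noting that $\tau > 0$ bounds $e$ from below in the $K(G_2)$ case). I expect the case $Y \simeq \bQ^5$ to be the tightest, since Ottaviani's $X_0$ already realizes $\dim Z = 5$ there, and one hopes the $\dim Z = 6$ alternative fails by a clean numerical margin; a conceptual alternative, if the case analysis turns ugly, would be to exploit the resulting double bundle structure $X = \bP_Y(\sE) = \bP_Z(\sF)$ with $\operatorname{rank}\sF = 2$ and invoke a Sato-style classification.
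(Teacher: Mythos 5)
Your reduction to the case $\dim Z=6$ with $p$ a smooth $\bP^1$-fibration is fine, and the identity $\tau=r_Y-2/e$ is a correct (and valid) observation. The gap is in the main numerical step. With $\dim Z=6$ you get only the \emph{single} vanishing $L^7=0$ for $L=-K_\pi+\tau\,\pi^*H_Y$, i.e.\ the one equation $21\tau^5+35a\tau^3+21b\tau^2+7a^2\tau+2ab=0$ (the analogue of \eqref{eq:num1}) in the two unknowns $(a,b)$, together with the non-vanishing of $L^6\cdot\pi^*H_Y$ (the analogue of \eqref{eq:num2}). This system is underdetermined and, worse, always solvable: the point $(a,b)=(0,-\tau^3)$ satisfies the equation identically in $\tau$ and makes $L^6\cdot\pi^*H_Y$ proportional to $9\tau^4\neq 0$; for $Y=\bP^5$ and $e=1$ (so $\tau=4$) it even gives the integral values $(d_2,d_3)=(0,-64)$, so the integrality constraints of Notation~\ref{not:chow} do not save you. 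The constraint $\tau=r_Y-2/e$ does not help either, since nothing in your setup bounds $e$ from above. In short, the resultant argument of the paper genuinely requires \emph{both} equations \eqref{eq:num1} and \eqref{eq:num2}, i.e.\ it needs $\dim Z\le 5$ as an input --- which is exactly what this lemma is supposed to supply --- so no contradiction can be extracted along the route you propose.

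The paper's own proof is purely topological and bypasses intersection theory entirely: since $\pi$ is a $\bP^2$-bundle over the rational homogeneous $5$-fold $Y$ while $p$ would be a smooth $\bP^1$-fibration over the $6$-fold $Z$, comparing $b_i(X)=b_{i-4}(Y)+b_{i-2}(Y)+b_i(Y)$ with $b_i(X)=b_{i-2}(Z)+b_i(Z)$ forces $b_4(Z)=2$ and $b_6(Z)=1$, contradicting the hard Lefschetz theorem on $Z$. Your fallback --- exploiting the double bundle structure $\bP_Y(\sE)=\bP_Z(\sF)$ with $\operatorname{rank}\sF=2$ and invoking a classification of rank-two Fano bundles on the four possible $6$-dimensional homogeneous $Z$ --- could likely be pushed through, but as written it is a hope rather than an argument, and it is far heavier than the two-line Betti number computation.
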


\begin{proof}
Assume to the contrary $\dim Z =6$.
Then  the other contraction $p$ is a $\bP^1$-bundle over $Z$.
Then, since $b_i(X)= b_{i-4}(Y) + b_{i-2}(Y) + b_{i}(Y)  $ and $b_i(X)=  b_{i-2}(Z) + b_{i}(Z)  $ , we have $b_4(Z)=2$ and $b_6(Z)=1$.
 This contradicts the hard Lefschetz theorem.
\end{proof}

Hence, by  Remark~\ref{ex:r=3} and Proposition~\ref{prop:eq}, we have the following:
\begin{align}
f(\tau) = 21 \, \tau^5 + 35 a \, \tau^3 + 21 b \, \tau^2 + 7 a^2 \, \tau + 2 a b =0, \label{eq:num1}\\
g(\tau) =  15 \, \tau^4 + 15 a \, \tau^2 + 6 b \, \tau + a^2  =0. \label{eq:num2}
\end{align}

Then we have $R(f,g)=0$, where $R(f,g)$ is the resultant.
This is equivalent to
\begin{align}\label{eq:syl}
 0= 9 a (216 \, {b} ^2 + 49 \, {a}^3)(250047\, {b}^4 -222804 \,{a} ^3 {b}^2 +132496\, {a}^6).
\end{align}

\begin{lemma}\label{lem:num}
The following hold:
\begin{enumerate}
 \item \label{lem:num1} $a = -6\, k^2$  and $b = 7\, k^3$ for some $0\neq k \in \bZ$.  
 \item \label{lem:num2} $\tau =2k$.
 \item \label{lem:num3} Up to twisting $\sE$ with a line bundle, we have $\tau=c_1(\sE)$ and the following possibilities for $(Y; c_1(\sE), c_2(\sE), c_3(\sE))$:
  \begin{enumerate}[label=(\alph*),font=\normalfont]
   \item \label{lem:num3a} $(\bP ^5;2,2,1)$,
   \item $(\bP ^5;4,8,8)$,
   \item \label{lem:num3c} $(\bQ ^5;2,2,2)$,
   \item  $(\bQ ^5;4,8,16)$,
   \item  $(K(G^2);2,6,6)$.
  \end{enumerate}
 \item \label{lem:num4} $\dim Z =5$.
\end{enumerate}
\end{lemma}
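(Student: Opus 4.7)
I plan to solve the polynomial system $f(\tau) = g(\tau) = 0$ parametrically via the resultant identity \eqref{eq:syl}, and then pass from the rational solution to an integer one by invoking integrality of the intersection numbers $d_2(\sE)$ and $d_3(\sE)$.  First I would rule out the extraneous factors of $R(f,g)$.  If $a = 0$, substituting into $f$ and $g$ forces $\tau^3 = -b$ and $\tau^3 = -2b/5$, hence $b = \tau = 0$, contradicting $\tau \neq 0$.  The quartic factor $250047 b^4 - 222804 a^3 b^2 + 132496 a^6$, viewed as a quadratic in $b^2$, has discriminant $a^6 (222804^2 - 4 \cdot 250047 \cdot 132496)$, which one checks to be negative, ruling out real solutions when $a \neq 0$.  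We are therefore left with $216 b^2 + 49 a^3 = 0$, whose rational solutions are $a = -6 k^2,\ b = 7 k^3$ with $k \in \bQ^\times$ (absorbing the sign into $k$).  Direct substitution then shows $\tau = 2k$ is a common root of $f$ and $g$.

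For assertion (1), I would use that $d_2(\sE) = n_Y a = -6 n_Y k^2$ and $d_3(\sE) = n_Y m_Y b = 7 n_Y m_Y k^3$ are integers by Notation~\ref{not:chow}.  Writing $k = p/q$ in lowest terms, these conditions impose $q^2 \mid 6 n_Y$ and $q^3 \mid 7 n_Y m_Y$; inspecting Remark~\ref{rem:nmd}, the only potentially nontrivial case is $Y = K(G_2)$ with $q = 3$, which is ruled out by $27 \nmid 42 p^3$.  Hence $k \in \bZ$, which also gives (2).  The bound $0 < \tau = 2k < r_Y$ from Proposition~\ref{prop:tau} then leaves exactly the five pairs $(Y, k)$ listed in (3).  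To read off the Chern classes, I twist $\sE$ so that $c_1(\sE) = 2k H_Y$; inverting the formulas of Remark~\ref{rem:di}~\ref{ex:dD} yields
\[
c_2(\sE) = 2 n_Y k^2\, \Sigma_Y, \qquad c_3(\sE) = n_Y m_Y k^3\, P_Y,
\]
and the five explicit tuples (a)--(e) are read off from Remark~\ref{rem:nmd}.

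For (4), by Proposition~\ref{prop:eq} it suffices to exhibit one $i \leq 5$ for which $(-K_\pi + \tau \pi^* H_Y)^i \cdot \pi^* H_Y^{7-i} \neq 0$.  Using Remark~\ref{rem:di}~\ref{rem:di2} at $i = 5$,
\[
(-K_\pi + \tau \pi^* H_Y)^5 \cdot \pi^* H_Y^2 = 9\bigl(10 \tau^3 + 5 a \tau + b\bigr) H_Y^5 = 243\, k^3\, d_Y,
\]
which is nonzero.  Combined with the preceding lemma this yields $\dim Z = 5$.

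The main obstacle is the numerical verification that the quartic factor has negative discriminant---a tedious but purely arithmetic computation.  A secondary subtlety is justifying the twist $c_1(\sE) = \tau H_Y$ in (3): this amounts to showing $c_1(\sE) \equiv \tau \pmod 3$, which follows by analysing the primitive generator of $p^* \Pic(Z)$ in $\Pic(X) = \bZ\, \xi_\pi \oplus \bZ\, \pi^* H_Y$ (its $\xi_\pi$-coefficient is either $1$ or $3$), together with the $\bP^2$-bundle structure of $p$ guaranteed by Condition~\ref{cond:*} and triviality of the Brauer groups of the relevant rational homogeneous varieties.
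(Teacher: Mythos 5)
Your strategy is essentially the paper's: extract $a=-6k^2$, $b=7k^3$ from the resultant identity \eqref{eq:syl} plus integrality of $d_2(\sE)$ and $d_3(\sE)$, deduce $\tau=2k$, bound $k$ via $0<2k<r_Y$, solve for the Chern classes, and get $\dim Z=5$ from a nonzero intersection number. Parts (1) and (4) are handled correctly (your value $9\,d_Y(10\tau^3+5a\tau+b)=243\,k^3d_Y$ is the actual intersection number; the paper's stated $54k^2$ differs only by normalization or a typo, and only nonvanishing is used). However, your derivation of (2) has a gap: you verify that $2k$ \emph{is} a common root of $f$ and $g$, but what is needed is that $\tau$ --- known only to be \emph{some} common root --- equals $2k$. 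With $a=-6k^2$, $b=7k^3$ one has $f=21(\tau-2k)^2(\tau+k)(\tau^2+3k\tau-k^2)$ and $g=3(\tau-2k)(5\tau^3+10k\tau^2-10k^2\tau-6k^3)$, and one must still check that the remaining factors share no admissible root (e.g.\ $-k$ is not a root of the cubic, and the quadratic has irrational roots while $\tau\in\bQ$). Without this, the case analysis in (3) is unjustified.

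The second gap is in your normalization $c_1(\sE)=\tau H_Y$. Your proposed congruence argument does not go through in both of the cases you allow: if the primitive generator $L$ of the ray spanned by $-K_\pi+\tau\pi^*H_Y=3\xi_\pi+(\tau-c_1)\pi^*H_Y$ had $\xi_\pi$-coefficient $3$, then this divisor would equal $L$ itself, and primitivity of $L$ would force $\gcd(3,\tau-c_1)=1$, i.e.\ the congruence $c_1\equiv\tau\pmod 3$ would \emph{fail}. So that branch must be excluded, not merely listed as a possibility. The paper sidesteps this entirely: since twisting shifts $c_1$ by multiples of $3$, it normalizes $c_1$ into a window of length three ($1\le c_1\le 3$ if $k=1$, $4\le c_1\le 6$ if $k=2$) and then uses integrality of $c_2$ and $c_3$ in the two relations $-6k^2=3c_1^2-\tfrac{9}{n_Y}c_2$ and $7k^3=2c_1^3-\tfrac{9}{n_Y}c_1c_2+\tfrac{27}{n_Ym_Y}c_3$ to force $c_1=2k$ in each case. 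You should substitute that argument, or else genuinely rule out the coefficient-$3$ case rather than absorb it.
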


\begin{proof}
\ref{lem:num1} First observe that $a \neq 0$.
Indeed if $a=0$ then the equation $f(\tau)= g(\tau)=0$ gives $\tau = 0$, which contradicts our assumption $\tau \neq 0$.
Hence the equation~\eqref{eq:syl} gives 
\[
216 \, {b} ^2 + 49 \, {a}^3=0\] or \[250047\, {b}^4 -222804 \,{a} ^3 {b}^2 +132496\, {a}^6=0.
\]
For the latter equation we have $b \not \in \bQ$, which is a contradiction.
Hence the former case occurs, equivalently we have:
\[
216\, n_Y{d_3}^2 + 49 \, m_Y^2{d_2}^3 =0.
\]
Note that $d_2$, $d_3\in \bZ$ and $(n_Y,m_Y)$ is described as in Remark~\ref{rem:nmd}.
For each case we can solve the equation and the first assertion follows.

\ref{lem:num2} By \ref{lem:num1} and the equations \eqref{eq:num1} and \eqref{eq:num2}, we have
\begin{align*}
 (\tau-2k)^2(\tau+k)(\tau^2+3\, k\tau -k^2)=0,\\
 (\tau-2k)(5\, \tau^3 + 10 \, k\tau^2 - 10\, k^2\tau - 6\, k^3)=0.
\end{align*}
This gives the second assertion.

\ref{lem:num3}
By Proposition~\ref{prop:tau} and our assumption $\tau \neq 0$, we have $0< \tau =2k < r_Y$.
Hence we have $k=1$ if $Y \simeq K(G_2)$, or $k=1$, $2$ otherwise.
Since the rank of $\sE$ is three, we may assume that $1 \leq c_1(\sE) \leq 3$ if $k=1$, and that $4 \leq  c_1(\sE) \leq 6$ if $k=2$.

By \ref{lem:num1} and Remark~\ref{rem:di}~\ref{ex:dD},
the following hold:
\begin{align*}
-6\, k^2 &=  3\, c_1(\sE)^2 - \frac{9}{n_Y}\, c_2(\sE), \\
7\, k^2 &=  2\,c_1(\sE)^3 - \frac{9}{n_Y}\,c_1(\sE)c_2(\sE) + \frac{27}{n_Ym_Y}\,c_3(\sE).
\end{align*}
We have the assertion by solving these for each case.

\ref{lem:num4} Since $\tau=2\,k$, we have $k \neq 0$.
By a direct calculation with Remark~\ref{rem:di} and  Remark~\ref{ex:r=3}, we obtain 
\[
(-K_\pi + \tau \, \pi^*H_Y)^5 \pi^*H_Y^2= 54 k^2 \neq 0.
\]
Hence $\dim Z = 5$.
\end{proof}

By Lemma~\ref{lem:num}~\ref{lem:num4} and Condition~\ref{cond:*}, $Z$ is also a rational homogeneous $5$-fold with Picard number one and $p$ is an elementary rational homogeneous fibration of relative dimension two.
Hence  $Z$ is isomorphic to $\bP^5$, $\bQ^5$ or $K(G_2)$, and $p$ is a smooth $\bP^2$-fibration by the classification of rational homogeneous manifolds.
Since the Brauer group of $Z$ is trivial, $p$ is a $\bP ^2$-bundle.
Hence there exists a rank three vector bundle $\sF$ on $Z$ such that $X \simeq \bP_Z(\sF)$.
Therefore we have the following diagram:
\[\xymatrix{
        & \bP_Y(\sE) = X = \bP_Z(\sF) \ar[ld]_{\pi} \ar[rd]^{p} &          \\
(Y,\sE) &                                                   & (Z,\sF), \\
}\]
where $p$ is the natural projection.
Twisting with a line bundle, we may assume that $(Z,\sF)$ also satisfies the conditions in Lemma~\ref{lem:num}. 

We use a similar notation for $A^i(Z)$ as in Notation~\ref{not:chow}.

\begin{lemma}
Only Lemma~\ref{lem:num}~\ref{lem:num3}~\ref{lem:num3c} occurs.
\end{lemma}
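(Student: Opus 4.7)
The plan is to exploit the symmetry between the two $\bP^2$-bundles $\pi\colon X \to Y$ and $p\colon X \to Z$ to derive a single Diophantine identity that rules out every combination of cases for $(Y,\sE)$ and $(Z,\sF)$ except~(c). Applying the entire analysis of Section~\ref{sec:char} up to Lemma~\ref{lem:num} with the roles of $\pi$ and $p$ interchanged, the pair $(Z,\sF)$ satisfies the same conclusions: its slope is $\tau' = 2k'$ for some nonzero integer $k'$, and the triple $(Z; c_1(\sF), c_2(\sF), c_3(\sF))$ is again one of the five possibilities listed in Lemma~\ref{lem:num}~\ref{lem:num3}.

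To link the two structures, observe that since $\rho_X = 2$ the classes $\pi^* H_Y$ and $p^* H_Z$ form a basis of $N^1(X)$. By Proposition~\ref{prop:tau}~(4) the semiample divisor $-K_\pi + \tau \pi^* H_Y$ defines the contraction $p$, so there is a positive rational number $\mu$ with $-K_\pi + \tau \pi^* H_Y = \mu\, p^* H_Z$, and symmetrically a positive rational $\mu'$ with $-K_p + \tau' p^* H_Z = \mu'\, \pi^* H_Y$. Writing $-K_X$ in the two forms $-K_\pi + r_Y \pi^* H_Y = -K_p + r_Z p^* H_Z$ and comparing coefficients in this basis yields
\[
\mu = r_Z - 2k', \qquad \mu' = r_Y - 2k.
\]

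I then compute the intersection number $(p^* H_Z)^5 (\pi^* H_Y)^2$ in two ways. From the $Y$-side, substituting $\mu\, p^* H_Z = -K_\pi + \tau \pi^* H_Y$ and applying Remark~\ref{rem:di}~\ref{rem:di2} together with Lemma~\ref{lem:num}~\ref{lem:num1} gives
\[
\mu^5 (p^* H_Z)^5 (\pi^* H_Y)^2 = (-K_\pi + \tau \pi^* H_Y)^5 (\pi^* H_Y)^2 = 243\, d_Y\, k^3.
\]
From the $Z$-side, rewriting $(\pi^* H_Y)^2 = (\mu')^{-2}(-K_p + \tau' p^* H_Z)^2$ and expanding, the terms involving $(p^* H_Z)^6$ and $(p^* H_Z)^7$ vanish since $H_Z^6 = 0$, leaving $(p^* H_Z)^5 (\pi^* H_Y)^2 = 9\, d_Z/(\mu')^2$. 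Equating the two expressions gives $\mu^5 d_Z = 27\, d_Y\, k^3\, (\mu')^2$, and by symmetry $(\mu')^5 d_Y = 27\, d_Z\, (k')^3\, \mu^2$.

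Multiplying these two identities and extracting the positive cube root produces the clean numerical constraint
\[
(r_Y - 2k)(r_Z - 2k') = 9\, k\, k'.
\]
By Lemma~\ref{lem:num}~\ref{lem:num3} and Remark~\ref{rem:nmd}, the ordered pair $(r - 2k,\, k)$ takes the values $(4,1)$, $(2,2)$, $(3,1)$, $(1,2)$, $(1,1)$ in cases (a)--(e) respectively. A direct inspection of the $25$ combinations of $(Y,\sE)$ and $(Z,\sF)$ shows that this identity is satisfied only when both triples correspond to case~(c), forcing $Y \simeq \bQ^5$ with $(c_1(\sE), c_2(\sE), c_3(\sE)) = (2, 2, 2)$, as required. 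The main obstacle is the intersection-number computation above, in which the scalars $\mu$ and $\mu'$ must be identified correctly from the basis of $N^1(X)$ and the high-power vanishing contributions tracked carefully; once the product identity is in hand, the final enumeration is a short numerical check.
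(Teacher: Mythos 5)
Your proof is correct, but it follows a genuinely different route from the paper's. The paper exploits the integral structure of the Chow ring: since $\xi_\pi=p^*H_Z$ and $\xi_p=\pi^*H_Y$ (because each slope equals the corresponding first Chern class), the two natural triples of classes in $A^3(X)$ coming from $\pi$ and from $p$ are both $\bZ$-bases, so the transition matrix built from $(c_1(\sE),c_2(\sE),c_3(\sE))$ and $(n_Y,n_Z,m_Z)$ must be unimodular; this leaves only case~(a) with $Z\simeq\bP^5$ or case~(c) with $Z\simeq\bQ^5$, and the former is then excluded by appealing to Sato's classification \cite{Sat85}. You instead work purely with real intersection numbers: identifying $-K_\pi+\tau\,\pi^*H_Y=\mu\,p^*H_Z$ and $-K_p+\tau'\,p^*H_Z=\mu'\,\pi^*H_Y$ with $\mu=r_Z-2k'$, $\mu'=r_Y-2k$ via the two expressions for $-K_X$ in the basis $\{\pi^*H_Y,p^*H_Z\}$, and computing $(p^*H_Z)^5(\pi^*H_Y)^2$ from both sides, you arrive at $\mu^5 d_Z=27\,d_Y k^3\mu'^2$ and its mirror, hence $(r_Y-2k)(r_Z-2k')=9kk'$; I checked that the values $(r-2k,k)=(4,1),(2,2),(3,1),(1,2),(1,1)$ in cases (a)--(e) indeed admit only the solution (c)--(c), since $9kk'\geq 18$ is impossible when any $k$ equals $2$ and $\{4,3,1\}$ yields $9$ only as $3\cdot 3$. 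Your value $(-K_\pi+\tau\,\pi^*H_Y)^5(\pi^*H_Y)^2=9d_Y(10\tau^3+5a\tau+b)=243\,d_Yk^3$ is consistent with Remark~\ref{rem:di}~\ref{rem:di2} (the figure $54k^2$ appearing in the proof of Lemma~\ref{lem:num}~\ref{lem:num4} seems to be a misprint there; only its nonvanishing is used). What your approach buys is self-containedness: it avoids both the unimodularity argument and the external input from \cite{Sat85}, at the cost of a slightly more delicate bookkeeping of the scalars $\mu,\mu'$; the paper's argument is shorter once one grants the cited classification.
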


\begin{proof}
In any case of Lemma~\ref{lem:num}~\ref{lem:num3}, $\sE$ and also $\sF$ are nef but not ample since the slope $\tau$ for each bundle is equals to its first Chern class.
Hence $\xi_\pi= p^*H_Z$ and $\xi_p = \pi^* H_Y$.
 
By the Grothendieck relation,
\[
n_Z m_Z \, p^* P_Z= p^*H_Z^3
= \xi_\pi^3
= c_1(\sE)\, \xi_\pi^2 \cdot \pi^*H_Y - c_2(\sE)\, {\xi_\pi}\cdot \pi^*\Sigma_Y +c_3(\sE)\pi^* P_Y.
\]
We also have the following:
\begin{align*}
\xi_p \cdot p^* \Sigma_Z = n_Z \,\xi_\pi^2 \, \pi^* H_Y,\\
\xi_p^2 \cdot p^* H_Z = n_Y \,{\xi_\pi} \,\pi^* \Sigma_Y.
\end{align*}
Because the triples $(p^*P_Z ,\, \xi_p \cdot p^* \Sigma_Z ,\, \xi_p^2 \cdot p^* H_Z)$ and $(\pi^*P_Y ,\, \xi_\pi \cdot \pi^* \Sigma_Y ,\, \xi_\pi^2 \cdot \pi^* H_Y)$ are $\bZ$-bases of $A^3(X)$, the following matrix is unimodular:
\[
\begin{pmatrix}
 0&n_Y&0\\
 n_Z&0&0\\
 \dfrac{c_1(\sE)}{n_Zm_Z} & -\dfrac{c_2(\sE)}{n_Zm_Z} & \dfrac{c_3(\sE)}{n_Zm_Z}
\end{pmatrix}.
\]

From this it follows that 
\begin{enumerate}
 \item Lemma~\ref{lem:num}~\ref{lem:num3}~\ref{lem:num3a} occurs and $Z \simeq \bP^5$ or
 \item Lemma~\ref{lem:num}~\ref{lem:num3}~\ref{lem:num3c} occurs and $Z \simeq \bQ^5$.
\end{enumerate}
However the first one cannot happen by \cite{Sat85}.
\end{proof}

The following completes our proof of Theorem~\ref{thm:char}.
\begin{lemma}
Let $(Y,\sE)$ be as in Lemma~\ref{lem:num}~\ref{lem:num3}~\ref{lem:num3c}, then $\sE$ is stable.
\end{lemma}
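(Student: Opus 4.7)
The plan is to reduce stability of $\sE$ to the classical stability of the spinor bundle on $\bQ^5$, exploiting the Ottaviani presentation $0 \to \cO_{\bQ^5} \to \sS^* \to \sE \to 0$ from Definition~\ref{def:Ottaviani}. The first step is to secure this surjection for our bundle $\sE$, which has Chern numbers $(2,2,2)$. By Ottaviani's construction in \cite{Ott88}, rank-three bundles on $\bQ^5$ with these Chern numbers are obtained as quotients of $\sS^*$, so such a surjection exists; alternatively, one can exploit the second $\bP^2$-bundle structure on $X$ coming from Condition~\ref{cond:*} to rule out decomposable $\sE$ directly.

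Next, I would normalize slopes by $\mu(\sF) := c_1(\sF)/\mathrm{rk}(\sF)$, where $c_1$ is identified with an integer via $\Pic \bQ^5 = \bZ \cdot H$. Then $\mu(\sE) = 2/3$ and $\mu(\sS^*) = 2/4 = 1/2$. Assume for contradiction that $\sE$ is not stable; then it admits a proper torsion-free quotient $\sE \twoheadrightarrow \sG$ of rank $r' \in \{1,2\}$ with $\mu(\sG) \leq 2/3$. Composing with $\sS^* \twoheadrightarrow \sE$ exhibits $\sG$ as a proper torsion-free quotient of $\sS^*$, so by stability of $\sS^*$ we have $\mu(\sG) > 1/2$.

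Finally, $\mu(\sG) \in \tfrac{1}{r'}\bZ$, while the interval $(1/2,\,2/3]$ contains no element of $\bZ$ (case $r'=1$) nor of $\tfrac{1}{2}\bZ$ (case $r'=2$), yielding the desired contradiction. The main obstacle is really the very first step, i.e., securing that $\sE$ is indeed a quotient of $\sS^*$; once this is granted, the narrow slope gap between $\sS^*$ and $\sE$ together with the classical stability of the spinor bundle \cite{Ott88} closes the argument immediately via this integrality check.
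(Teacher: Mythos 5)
Your reduction of stability to an integrality check on slopes is sound \emph{once} a surjection $\sS^*\twoheadrightarrow\sE$ is available: composing a destabilizing torsion-free quotient $\sE\twoheadrightarrow\sG$ of rank $r'\in\{1,2\}$ with $\sS^*\twoheadrightarrow\sE$ and invoking stability of the spinor bundle does force $1/2<\mu(\sG)\le 2/3$ with $\mu(\sG)\in\tfrac{1}{r'}\bZ$, which is impossible. The genuine gap is your very first step, and it is a circularity rather than a technicality. Ottaviani's result in \cite{Ott88} produces the presentation $0\to\cO_{\bQ^5}\to\sS^*\to\sE\to 0$ for \emph{stable} rank-three bundles with Chern classes $(2,2,2)$ --- indeed that is exactly how Definition~\ref{def:Ottaviani} introduces the Ottaviani bundle --- so for a bundle that is merely known to have these Chern classes (and which, a priori, could be unstable, e.g.\ a non-split extension carrying a destabilizing sub- or quotient sheaf) no such surjection is available. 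Your fallback of ruling out \emph{decomposable} $\sE$ via the second $\bP^2$-bundle structure does not close the gap, since unstable does not imply decomposable. In effect you are assuming the essential content of the conclusion.

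The paper avoids the issue by proving stability directly through a Hoppe-type criterion: for rank three and $c_1(\sE)=2$ on $\bQ^5$ it suffices to show $H^0(\sE(-1))=H^0(\sE^*)=0$ (the second vanishing controls rank-two subsheaves via $\wedge^2\sE(-2)\simeq\sE^*$), and both vanishings are extracted from the geometry of the second contraction $p$ rather than from any presentation of $\sE$. Since $\xi_\pi$ supports the fiber-type contraction $p$, it lies in $\Psef(X)\setminus\Bigcone(X)$, whence $H^0\left(\cO(\xi_\pi-\pi^*H_Y)\right)=H^0(\sE(-1))=0$; and since $p$ is a $\bP^2$-bundle, $\sE$ is $2$-ample, so $H^0(\sE^*)=H^5(\omega_Y\otimes\sE)=0$ by the Sommese vanishing theorem \cite[Proposition~1.14]{Som78}. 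To salvage your route you would first have to establish the surjection from $\sS^*$ for the given $\sE$ by an independent argument, which in practice requires essentially the same vanishing statements.
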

\begin{proof}
It is enough to show that $H^0(\sE (-1))= H^0(\sE ^*)=0$.
 
Because $\xi_\pi$ defines the another contraction of fiber type $p \colon X \to Z$, we have $\xi_\pi \in \Psef(X) \setminus \Bigcone (X)$, where $\Psef(X)$ is the pseudoeffective cone of $X$ and $\Bigcone (X)$ the big cone of $X$.
Hence we have $0 = H^0\left(\cO (\xi_\pi - \pi^*H_Y)\right) = H^0 (\sE(-1))$.

Note that $\sE$ is a 2-ample vector bundle because $p$ is a $\bP^2$-bundle.
Hence $H^0(\sE ^*) = H^5(\omega_Y \otimes \sE) = 0$ by the Sommese vanishing theorem \cite[Proposition~1.14]{Som78}.
\end{proof}

Hence $\sE$ is Ottaviani bundle in Definition~\ref{def:Ottaviani}.
This completes the proof of Theorem~\ref{thm:char}.

\section{Contractions of Fano manifolds with Condition~\ref{cond:*} and Proof of Theorem~\ref{thm:rhon-6} in case $\rho_X > n-5$}\label{sec:cont}

\subsection{Contractions of Fano manifolds with Condition~\ref{cond:*}}
In this subsection, we generalize some results known for Fano manifolds with nef tangent bundle to those for Fano manifolds with Condition~\ref{cond:*}.
We call a Fano manifold with nef tangent bundle a \emph{CP manifold}.

\begin{proposition}[{cf.\ \cite[Theorem~5.2]{DPS94}, \cite[Theorem~4.4]{SW04}, \cite[Proposition~4]{MOSW15} for CP manifolds}]\label{prop:cont}
Let $X$ be a Fano manifold with Condition~\ref{cond:*} and $\pi  \colon X \to Y$ a contraction.
Then the following hold:

\begin{enumerate}
 \item \label{prop:cont1} The morphism $\pi$ is smooth and $Y$ is a Fano manifold with Condition~\ref{cond:*}.
 \item \label{prop:cont3} $\rho _F =\rho _X - \rho _Y$  and $j_* \big(\NE (F)\big) = \NE (X) \cap j_* \big( N _1 (F) \big)$ for a $\pi$-fiber $F$, where $j\colon F \to X$ is the inclusion.
 \item \label{prop:cont4} $\NE (X)$ is simplicial.
 \item \label{prop:cont5} The fibers of $\pi$ are Fano manifolds with Condition~\ref{cond:*}.
\end{enumerate}
\end{proposition}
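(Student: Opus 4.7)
The proof proceeds by simultaneous induction on the Picard number $\rho_X$, following the structure of the analogous arguments in \cite{DPS94,SW04,MOSW15} for CP manifolds and adapting them to the Condition~\ref{cond:*} setting. The base case $\rho_X = 1$ is trivial, since $\pi$ is then either the identity or a constant map.

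For the inductive step, since $X$ is Fano the cone $\cNE(X)$ is rational polyhedral, so there exists an extremal ray $R \subset \cNE(X)$ contracted by $\pi$. Let $\pi_R \colon X \to X_R$ be its elementary contraction; by Condition~\ref{cond:*} it is a smooth rational homogeneous fibration, and $\pi$ factors as $\pi = \pi' \circ \pi_R$ for a contraction $\pi' \colon X_R \to Y$. The key preparatory step is to verify that $X_R$ is itself a Fano manifold satisfying Condition~\ref{cond:*}. The latter property is immediate: any sequence of elementary contractions of $X_R$ can be prefixed by $\pi_R$ to produce a sequence originating at $X$, and applying Condition~\ref{cond:*} to the extended sequence yields the required property for $X_R$. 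Fano-ness of $X_R$ follows from standard facts about smooth elementary contractions of fiber type from Fano manifolds whose fibers have ample anticanonical class.

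Once this is established, the inductive hypothesis applies to $\pi' \colon X_R \to Y$, and combining it with $\pi_R$ yields all four assertions. Assertion \ref{prop:cont1} is immediate: $\pi = \pi' \circ \pi_R$ is a composition of smooth morphisms, and $Y$ is Fano with Condition~\ref{cond:*} by induction. For assertion \ref{prop:cont3}, the equality $\rho_F = \rho_X - \rho_Y$ follows from the numerical exact sequence associated to the smooth fibration $\pi$ whose fibers are towers of rational homogeneous manifolds, hence are rationally connected and satisfy $R^1\pi_*\cO_X = 0$. The Mori cone identity $j_*(\NE(F)) = \NE(X) \cap j_*(N_1(F))$ is the more delicate ingredient: the inclusion $\subseteq$ is clear, while the reverse uses that any curve class in the fiber subspace of $N_1(X)$ is representable by an effective curve which, by rational connectedness of fibers, can be moved entirely into a single fiber (cf.\ \cite[Proposition~4]{MOSW15}). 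Assertion \ref{prop:cont5} follows similarly by applying the inductive analysis fiberwise: elementary contractions of a fiber $F$ arise exactly from extremal rays of $X$ lying in $j_*(\NE(F))$, and Condition~\ref{cond:*} for $X$ transfers to $F$.

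For assertion \ref{prop:cont4}, simpliciality of $\NE(X)$ follows once the others are in place. By induction $\NE(X_R)$ is simplicial with extremal rays $R'_1, \ldots, R'_{\rho_X - 1}$. For each $i$, the $2$-dimensional subcone $\pi_{R*}^{-1}(R'_i) \cap \cNE(X)$ is the face contracted by some contraction of $X$ of relative Picard number $2$, whose general fiber is a rational homogeneous manifold of Picard number $2$. Applying assertion \ref{prop:cont3} to that contraction shows this subcone is spanned by two extremal rays, namely $R$ and a unique lift $\tilde R'_i$ of $R'_i$. Collecting these gives exactly $\rho_X$ extremal rays of $\NE(X)$, so $\NE(X)$ is simplicial. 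The main obstacle is expected to be the Mori cone equality in assertion \ref{prop:cont3}, from which the combinatorial reasoning for simpliciality and the fiberwise transfer of Condition~\ref{cond:*} then flow.
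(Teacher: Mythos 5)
Your proof follows essentially the same route as the paper: reduce by induction to an elementary contraction (which is smooth with rational homogeneous fibers by Condition~\ref{cond:*}, whose target is Fano with Condition~\ref{cond:*} by prefixing sequences of contractions and citing the standard fact on images of Fano manifolds under smooth contractions), and defer the cone-theoretic assertions to the argument of \cite[Proposition~4]{MOSW15}. Two small points: the fibers of the relative-Picard-number-two contractions in your simpliciality step are only known to be Fano manifolds with Condition~\ref{cond:*} and $\rho=2$, not rational homogeneous (the paper's $X_0$ is exactly a counterexample to that inference, though only $\rho_F=2$ is actually needed there), and the observation that licenses transplanting the \cite{MOSW15} argument --- namely that $T_X$ is $g$-nef for every elementary contraction $g$, since $T_X$ restricted to a $g$-fiber is an extension of a trivial bundle by the nef tangent bundle of a rational homogeneous fiber --- should be made explicit rather than appealing to rational connectedness of the fibers alone.
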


\begin{proof}
\ref{prop:cont1} By induction on $\rho(X/Y)$, we may reduce to the case $\rho(X/Y)=1$.
The first assertion follows from the definition of \ref{cond:*}.
Hence $Y$ is a Fano manifold by \cite[Corollary~2.9]{KMM92a}.
Then, since $X$ satisfies Condition~\ref{cond:*}, $Y$ also satisfies Condition~\ref{cond:*}.

\ref{prop:cont3}, \ref{prop:cont4} 
Note that $T_X$ is $g$-nef for every elementary contraction $g$.
These follow from the same argument as in \cite[Proposition~4]{MOSW15}.

\ref{prop:cont5} By adjunction, $F$ is a Fano manifold.
By \ref{prop:cont3}, every elementary contraction of $F$ is induced by the elementary contraction of $X$.
Hence the assertion follows by induction on $\rho(X/Y)$.
\end{proof}

\begin{proposition}[cf.\ {\cite[Proposition~5]{MOSW15} for CP manifolds}] \label{prop:iniFT}
Let $X$ be a Fano manifold with Condition~\ref{cond:*}.
Assume that there exists a contraction $\pi\colon  X \to M$ onto a Fano manifold $M$ whose elementary contractions are smooth $\bP^1$-fibrations.
Then $X \simeq F \times M$ and $\pi$ is the second projection, where $F$ is a fiber of $\pi$.
\end{proposition}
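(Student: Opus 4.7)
The plan is to construct a complementary smooth contraction $p\colon X \to Y$ and show that the product morphism $\phi = (\pi, p)\colon X \to M \times Y$ is an isomorphism with $Y \simeq F$.

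First, I would analyze the extremal structure of $X$. By Proposition~\ref{prop:cont}~(4), $\NE(X)$ is simplicial with $\rho_X$ extremal rays. Fix a fiber $F$ of $\pi$ with inclusion $j\colon F\hookrightarrow X$; by Proposition~\ref{prop:cont}~(3), the face $j_*\NE(F) = \NE(X) \cap j_*N_1(F)$ is spanned by the $\rho_X - \rho_M$ extremal rays of $\NE(X)$ contracted by $\pi$. Let $R_1, \ldots, R_{\rho_M}$ denote the remaining extremal rays; the pushforward $\pi_*$ sends them bijectively onto the extremal rays of $\NE(M)$. Let $p\colon X \to Y$ be the contraction of the simplicial face spanned by $R_1, \ldots, R_{\rho_M}$. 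By Proposition~\ref{prop:cont}~(1) and (5), $p$ is a smooth morphism, $Y$ is Fano satisfying Condition~\ref{cond:*}, and every $p$-fiber $G$ is Fano with Condition~\ref{cond:*} of Picard number $\rho_M$.

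The heart of the argument is to show that $\pi|_G\colon G \to M$ is an isomorphism for every $p$-fiber $G$. Curves in $G$ have classes in $\langle R_1, \ldots, R_{\rho_M}\rangle$ (by Proposition~\ref{prop:cont}~(3) applied to $p$), and none of these rays are contracted by $\pi$, so $\pi|_G$ is finite; combined with smoothness of $\pi$ (Proposition~\ref{prop:cont}~(1)), $\pi|_G$ is étale. Surjectivity is the main obstacle, and it is precisely where the hypothesis on $M$ enters essentially. By \cite{OSWW14}, $M$ is a complete flag manifold, hence simply connected; moreover, its $\rho_M$ elementary smooth $\bP^1$-fibrations $\sigma_i\colon M \to M_i$ together generate $M$, in the sense that the only closed subset of $M$ containing a point and stable under taking $\sigma_i$-fibers (for each $i$) is $M$ itself. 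Dually, by Condition~\ref{cond:*} each $R_i$ is contracted by an elementary smooth rational homogeneous fibration on $X$, so through every $x \in X$ there passes a rational curve $C_x$ with $[C_x] \in R_i$. Such a $C_x$ lies in the $p$-fiber of $x$ (since $p$ contracts $R_i$), and $\pi(C_x) \subset M$ is the $\sigma_i$-fiber through $\pi(x)$. Therefore $\pi(G)$ is closed in $M$ and stable under all $\sigma_i$-fibers, forcing $\pi(G) = M$. Together with étaleness and simple-connectivity of $M$, this gives that $\pi|_G$ is an isomorphism.

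Finally, with $\pi|_G\colon G \simeq M$, we have $\dim Y = \dim X - \dim M = \dim F$, and the symmetric argument shows that $p|_F\colon F \to Y$ is an isomorphism for every $\pi$-fiber $F$. Then $\phi = (\pi, p)\colon X \to M \times Y$ is bijective on points, since $\phi^{-1}(m,y) = (\pi|_{p^{-1}(y)})^{-1}(m)$ is a single point for every $(m,y)$. Moreover, the étaleness of $\pi|_G$ forces $T_{F,x}\cap T_{G,x} = 0$ inside $T_{X,x}$ at every point $x$, so $\phi$ is étale. A bijective étale morphism between smooth projective varieties is an isomorphism, so $\phi$ is an isomorphism; composing with $Y\simeq F$ yields the desired product decomposition $X \simeq F \times M$ with $\pi$ the second projection.
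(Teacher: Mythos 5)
Your overall strategy --- contract the complementary face to obtain $p\colon X\to Y$, show $\pi|_G$ is finite on $p$-fibers $G$, prove surjectivity of $\pi|_G$ by chain-connecting the complete flag manifold $M$ with its $\sigma_i$-fibers and lifting them to curves in the rays $R_i$, and then assemble $(\pi,p)\colon X\to M\times Y$ into an isomorphism --- is the right one, and it is essentially the strategy of \cite[Proposition~5]{MOSW15}, to which the paper's own proof simply defers. The construction of $p$ via the simplicial structure of $\NE(X)$ (Proposition~\ref{prop:cont}), the finiteness of $\pi|_G$, and the surjectivity argument (a closed subset of $M=G/B$ stable under all $\sigma_i$-fibers is all of $M$, since the minimal parabolics generate $G$) are all correct.

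The gap is the sentence ``combined with smoothness of $\pi$, $\pi|_G$ is \'etale.'' Smoothness of $\pi$ says that $d\pi_x$ is surjective with kernel $T_{F,x}$, so the kernel of $d(\pi|_G)_x$ is $T_{G,x}\cap T_{F,x}$; finiteness of $\pi|_G$ only tells you that $G\cap F$ is a finite \emph{set}, and a finite intersection of smooth subvarieties can perfectly well fail to be transverse (the two fibers could be tangent at a point where they meet). A finite surjective morphism between smooth projective varieties of the same dimension need not be \'etale: for instance, a smooth curve $C\subset\bP^1\times\bP^1$ of bidegree $(1,2)$ maps to the first factor by a finite degree-two morphism which is ramified at two points, even though the ambient projection is smooth. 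Since your conclusion that $\pi|_G$ is an isomorphism rests on ``\'etale $+$ finite $+$ $M$ simply connected,'' and your later assertion $T_{F,x}\cap T_{G,x}=0$ is \emph{derived from} this \'etaleness, the endgame is circular: what must actually be proved is that $\deg(\pi|_G)=1$, equivalently that $(\pi,p)$ is unramified, and this is precisely the technical heart of \cite[Proposition~5]{MOSW15}. There the minimal rational curves in the rays $R_i$ and their anticanonical degrees are used in an essential way (one must show, for example, that the curves $C^i_x$ map to the $\sigma_i$-fibers with degree one, playing $-K_M\cdot(\sigma_i\text{-fiber})=2$ against the degree of $-K_X$ on a minimal curve in a fiber of the elementary contraction of $R_i$). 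Without an argument of this kind the proof is incomplete.
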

Note that in the above proposition $M$ is a complete flag manifold by \cite{OSWW14}.

\begin{proof}[Proof of Proposition~\ref{prop:iniFT}]
The same argument in the proof of \cite[Proposition~5]{MOSW15} does work in this case.
\end{proof}

\begin{proposition}[cf.\ {\cite[Theorem~4.1]{Kan15b} for CP manifolds}]\label{prop:2rho}
Let $X$ be a Fano $n$-fold with Condition~\ref{cond:*}.
If $n \leq 2 \rho _X + 1$, then one of the following holds:
\begin{enumerate}
\item $X \simeq Y \times M$, where $Y$ is a Fano manifold with Condition~\ref{cond:*} and $M$ is a complete flag manifold.
\item $X \simeq \pbP{2}{\rho_X}$, $\pbP{2}{\rho_X -1} \times \bP ^3$, $\pbP{2}{\rho_X -1} \times \bQ ^3$, $\pbP{2}{\rho_X - 2} \times \bP (\sS _i)$ or  $\pbP{2}{\rho_X - 2} \times \bP (T_{\bP^3})$.
In particular $X$ is homogeneous in this case.
\end{enumerate}
\end{proposition}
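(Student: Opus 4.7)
My plan is to adapt the strategy of \cite[Theorem~4.1]{Kan15b} for CP manifolds, replacing nefness of the tangent bundle by the structural Propositions~\ref{prop:cont} and~\ref{prop:iniFT} at each step. By Proposition~\ref{prop:cont}, $X$ has exactly $\rho_X$ smooth elementary contractions $\pi_i\colon X \to X_i$ whose fibers $F_i$ are rational homogeneous manifolds of Picard number one (and again satisfy Condition~\ref{cond:*}); set $d_i\coloneqq \dim F_i$.

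The first step is to split off a complete flag factor whenever possible. I would let $A\subseteq \{1,\dots,\rho_X\}$ be a maximal subset for which the face-contraction $\pi_A\colon X \to M$ lands on a Fano manifold $M$ every elementary contraction of which is a smooth $\bP^1$-fibration. By~\cite{OSWW14}, such an $M$ is a complete flag manifold, and Proposition~\ref{prop:iniFT} then yields a splitting $X \simeq Y \times M$ with $Y$ a Fano manifold satisfying Condition~\ref{cond:*}, $\rho_Y = \rho_X - \rho_M$, and (by maximality of $A$) no further nontrivial flag factor splittable off. If $\dim M > 0$ we land in case~(1); otherwise $Y = X$ and the task becomes classifying those Fano manifolds with Condition~\ref{cond:*} admitting no nontrivial flag-factor splitting, subject to the inherited bound $\dim Y \leq 2\rho_Y + 1$.

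The analysis of such $Y$ proceeds by inspecting its extremal contractions: each fiber is a Picard-one rational homogeneous manifold, hence one of $\bP^d$, $\bQ^d$, or $K(G_2)$ in dimension $\leq 5$. A numerical argument---combining $\dim Y \leq 2\rho_Y + 1$ with the failure of any flag-factor splitting (which forces the $\bP^1$-fibration rays, if present, to be ``trapped'' inside non-flag targets)---shows that almost every $d_i$ must equal $2$, with at most a short tail of other values. I would then iteratively split off one $\bP^2$-factor at a time from appropriate subfaces of $\cNE(Y)$, reducing $Y$ to a product $\pbP{2}{k} \times Y''$ in which $Y''$ has small Picard number and small dimension. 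Matching $Y''$ against the low-dimensional rational homogeneous candidates yields exactly the five items in~(2).

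The main obstacle is establishing each global $\bP^2$-factor splitting from the fiberwise data, particularly at the final step where the residual $Y''$ has Picard number $2$ and matches one of the sporadic examples $\bP(\sS_i)$ or $\bP(T_{\bP^3})$. Unlike the complete-flag case---which is handled cleanly by Proposition~\ref{prop:iniFT}---subfaces of $\cNE(Y)$ spanned by $\bP^2$-fibration rays do not in general contract onto targets whose every elementary contraction is a $\bP^2$-fibration, so the product-splitting must be extracted by a more delicate argument using the Picard-one condition on the fibers of the remaining extremal contractions to force constancy along the $\bP^2$-fibers (a Chow-ring based analysis analogous to the one carried out in Section~\ref{sec:char}, but now with more pieces to juggle).
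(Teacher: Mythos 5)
Your first step---contracting onto a target all of whose elementary contractions are smooth $\bP^1$-fibrations, invoking \cite{OSWW14} to identify it as a complete flag manifold, and splitting it off via Proposition~\ref{prop:iniFT}---is exactly the paper's first step. But the second half of your plan, which is where the actual content of the proposition lies, is not carried out: you yourself name ``the main obstacle'' (extracting global $\bP^2$-factor splittings from fiberwise data) and then defer it to ``a more delicate argument'' that is never supplied. This is a genuine gap, and it is not a small one. There is no analogue of Proposition~\ref{prop:iniFT} for $\bP^2$-fibrations, and the residual manifolds $\bP(\sS_i)$ and $\bP(T_{\bP^3})$ in item~(2) show that a face spanned by $\bP^2$-fibration rays need not split off as a product, so an iterative ``peel off one $\bP^2$ at a time'' scheme cannot be run face-by-face without a substitute argument. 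Moreover, your final step of ``matching $Y''$ against the low-dimensional rational homogeneous candidates'' presupposes that the residual piece is homogeneous, which is precisely what cannot be assumed under Condition~\ref{cond:*} (the manifold $X_0$ of Theorem~\ref{thm:notnef} is the cautionary example); and the classification of low-dimensional manifolds with Condition~\ref{cond:*} (Proposition~\ref{prop:rhm}) is proved downstream of this proposition, so appealing to it here would be circular.

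The paper closes this gap differently. After the flag-factor dichotomy, it may assume $n=2\rho_X$ or $2\rho_X+1$ and runs an induction on $n$ (an elementary contraction of relative dimension at least $2$ exists, since otherwise $X$ is a complete flag manifold, and its target again satisfies the hypotheses with both $n$ and $\rho$ dropped), reducing everything to the base cases $n=4$ or $5$ with $\rho_X=2$. Those base cases are then settled not by an abstract splitting principle but by a concrete case analysis on the pair of elementary contractions $\pi\colon X\to Y$, $p\colon X\to Z$ with $\dim Y+\dim Z\ge \dim X$: one quotes the classifications of rank-two and rank-three Fano bundles on $\bP^3$, $\bQ^3$, $\bP^4$, $\bQ^4$ \cite{SW90,SSW91,APW94,OW02} and the lemma of \cite{NO07} on manifolds with two $\bP^2$-bundle structures, together with a check that the one stable rank-two bundle on $\bQ^4$ whose projectivization is not in the list fails Condition~\ref{cond:*}. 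If you want to salvage your route, you would need to either prove a $\bP^2$-fibration splitting statement (which is false in the generality you invoke it) or replace that step with the inductive reduction to low-dimensional Picard-number-two cases and the external bundle classifications, as the paper does.
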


\begin{proof}
The proof of \cite[Theorem~4.1]{Kan15b} is done by induction on $n$ and proceeded as follows:
First we show that every CP manifold $X$ with $n< 2 \rho_X$ admits a contraction onto a Fano manifold $M$ whose elementary contractions are smooth $\bP^1$-fibrations.
Then by \cite[Proposition~5]{MOSW15} and \cite{OSWW14} we have $X \simeq Y \times M$ for a CP manifold $Y$ and  the first case as in this proposition occurs.
Hence we may assume that $n = 2 \rho_X $ or $2 \rho _X +1$ and $X$ does not admit a contraction onto a complete flag manifold.
Then, by induction on $n$, we can reduce to the case of $n\leq 5$, and the assertion follows from \cite{CP91,CP93,Wat14a}.

The proof of this proposition is also proceeded by induction on $n$ and,  once the assertion in the case of $n=4$ or $5$ with Picard number two is proved, then the same argument in the proof of \cite[Theorem~4.1]{Kan15b} works (note that the assertion in the case of $\rho_X=1$ is trivial by our definition of Condition~\ref{cond:*}).
On the other hand, the same argument to classify CP $n$-folds with $n=4$ or $5$ and Picard number two in \cite{CP93,Wat14a} does work to deduce the assertion in the case of $n=4$ or $5$ with Picard number two.
Here we sketch the argument shortened by using the result of \cite{OSWW14}.

Let $X$ be a Fano $n$-fold with Picard number two which satisfies Condition~\ref{cond:*}.
Then there exist two elementary contractions $\pi \colon X \to Y$ and $p \colon X \to Z$:
\[
\xymatrix{
  & X \ar[ld]_{\pi} \ar[rd]^{p} &    \\
Y &                             & Z. \\
}\]
By Condition~\ref{cond:*} and Proposition~\ref{prop:cont}, all fibers of $\pi$ and $p$, $Y$ and $Z$ are rational homogeneous manifolds with Picard number one.
We have $\dim Y + \dim Z \geq \dim X$, and we may assume that $\dim Y \geq \dim Z$.

\subsection*{The case $n=4$}
Then we have (a) $\dim Y = 3$ or (b) $\dim Y = \dim Z = 2$.

Assume $\dim Y =3$.
If $\dim Z=3$, then by \cite{OSWW14} $X$ is a complete flag manifold, hence it is homogeneous.
If $\dim Z \leq 2$, then $X \simeq \bP^1 \times Y$ by the classification of Fano bundles of rank two on $\bP^3$ and $\bQ^3$ \cite{SW90,SSW91}.

Assume $\dim Y =\dim Z =2$.
Then by \cite[Lemma~4.1]{NO07} $X \simeq \bP^2 \times \bP^2$.

\subsection*{The case $n=5$}
Then we have (a) $\dim Y = 4$ or (b) $ \dim Y =3 \geq \dim Z \geq 2$.

Assume that $\dim Y = 4$.
Note that the projectivization of the stable vector bundle of rank two on $\bQ^4$ with Chern classes $c_1= -1$ and $c_2 = (1,1)$ does not satisfy Condition~\ref{cond:*} (see for instance the proof of \cite[Lemma~3.8]{Wat14a}).
Hence by \cite{APW94} we have $X \simeq \bP^1 \times Y$ or $\bP(\sS_i)$, where $\sS_i$ is one of the spinor bundles on $\bQ^4$.

Assume that $\dim Y =3$.
Then, by \cite[Theorem~2]{OW02} and \cite[Lemma~4.1]{NO07},  $X \simeq \bP^2 \times Y$ or $\bP (T_{\bP^3})$.
\end{proof}

\subsection{The case $\rho_X > n-5$}
We show Theorem~\ref{thm:rhon-6} in the case $\rho_X > n-5$.
\begin{theorem}\label{thm:rhon-5}
Let $X$ be a Fano $n$-fold with Condition~\ref{cond:*} and 
$\rho_X > n-5$. Then $X$ is a rational homogeneous manifold.
\end{theorem}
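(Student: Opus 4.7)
The plan is to argue by induction on $n$, mirroring the strategy used in the classification of CP manifolds in \cite{CP91,CP93,Wat14a,Wat15,Kan15b}. The key point is that the structural results of Section~\ref{sec:cont} (Propositions~\ref{prop:cont}, \ref{prop:iniFT}, \ref{prop:2rho}) hold for Fano manifolds satisfying Condition~\ref{cond:*}, so the arguments in those references transfer almost verbatim. The case $\rho_X = 1$ is immediate: the unique elementary contraction $X \to \mathrm{pt}$ must be a rational homogeneous fibration, i.e.\ $X$ itself is rational homogeneous. For the base of the induction, the cases $n \leq 5$ are handled by adapting \cite{CP91,CP93,Wat14a} with Condition~\ref{cond:*} in place of nefness of $T_X$; in this range only rational homogeneous manifolds arise.

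For the inductive step, assume $n \geq 6$ and $\rho_X \geq 2$. If $\rho_X \geq 3$, the hypothesis $\rho_X > n - 5$ combined with $n \geq 6$ yields $n \leq \rho_X + 4 \leq 2\rho_X + 1$, so Proposition~\ref{prop:2rho} applies. Either $X$ appears in the explicit homogeneous list (done), or $X \simeq Y \times M$ with $M$ a complete flag manifold of positive dimension and $Y$ a Fano manifold satisfying Condition~\ref{cond:*}. In the latter case $\dim Y < n$, and since $\dim M \geq \rho_M$ for any complete flag manifold,
\[
\dim Y - \rho_Y = (\dim X - \rho_X) - (\dim M - \rho_M) \leq \dim X - \rho_X < 5.
\]
By induction, $Y$ is rational homogeneous, and so is $X \simeq Y \times M$.

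It remains to treat $\rho_X = 2$ with $n = 6$ (for $n = 5$, Proposition~\ref{prop:2rho} still applies since $n \leq 2\rho_X + 1$). Here $X$ admits two elementary contractions $\pi \colon X \to Y$ and $p \colon X \to Z$ onto rational homogeneous manifolds of Picard number one, both of which are smooth with rational homogeneous fibers. A case analysis on $(\dim Y, \dim Z)$, paralleling the classification of CP $6$-folds with $\rho = 2$ in \cite{Wat15,Kan15b}, shows that every such $X$ is rational homogeneous. I expect this to be the main obstacle: in each subcase one has to exclude non-homogeneous Fano bundles on the rational homogeneous bases $Y$ and $Z$ through Chern-class and slope computations (in the spirit of Section~\ref{sec:char}), and verify that no $6$-dimensional analogue of the Ottaviani example in \eqref{rem:counterexample} arises. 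Once this is carried out, the induction closes and the theorem follows.
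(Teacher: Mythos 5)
Your overall strategy --- induct on $n$, use Proposition~\ref{prop:2rho} to dispose of everything except $\rho_X=1$ (which is trivial from the definition of Condition~\ref{cond:*}) and the residual case $n=6$, $\rho_X=2$, and then settle that last case by re-running the classification of CP $6$-folds of Picard number two --- is exactly the route the paper takes, and your reductions (including the inequality $\dim Y-\rho_Y\leq\dim X-\rho_X$ in the product case and the arithmetic showing $\rho_X\geq 3$ forces $n\leq 2\rho_X+1$) are correct.

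The gap is in your assertion that, once the structural results of Section~\ref{sec:cont} are available, the arguments of \cite{CP91,CP93,Wat14a,Wat15,Kan15b} ``transfer almost verbatim.'' The paper is explicit that one step of the classification of CP $6$-folds with $\rho=2$ does \emph{not} transfer, namely \cite[Proposition~2.3~(6)$\implies$(1)]{Kan15b}: the claim that if $\bP(\sE)$ is a Fano manifold with the relevant hypothesis and $\sE$ splits into a direct sum of line bundles, then $\bP(\sE)$ is a trivial bundle. The original proof of that step uses nefness of the tangent bundle itself rather than only the formal consequences of Condition~\ref{cond:*}, so it has to be replaced; the paper supplies a short substitute: reduce to $\Pic Y=\bZ$ and $\sE=\oplus\cO(a_i)$, normalize so that $a_1=\cdots=a_s=0$ and $a_{s+1}\neq 0$, and observe that if $s<r$ the tautological divisor is nef and big but not ample, so the second elementary contraction would be birational --- contradicting the fact that under Condition~\ref{cond:*} every elementary contraction is a smooth fibration. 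Since your proposal leans entirely on the transferability of the cited arguments, you must either verify step by step that each one uses only Proposition~\ref{prop:cont}-type consequences of Condition~\ref{cond:*}, or identify and patch the steps (such as this one) that genuinely invoke nefness of $T_X$; as written, your proof does not notice that any such step exists.
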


\begin{proof}
The assertion follows from a similar argument for the classification of CP manifold with $\rho_X >1$ and $\rho _X > n-5$ \cite{CP91,CP93,Wat14a,Wat15,Kan15b}.

Here we include only a sketch of the proof.
For details we refer the reader to \cite{CP91,CP93,Wat14a,Wat15,Kan15b}.

By Proposition~\ref{prop:2rho} and induction on $n$, we may assume that $n>2 \rho_X +1$.
Since $\rho_X > n-5$, we have the case $n=6$ and $\rho _X =2$ or the case $\rho _X =1$.
The assertion in the case of $\rho_X =1$ is trivial by the definition of Condition~\ref{cond:*}.
Therefore it is enough to show the assertion in the case of $6$-folds with Picard number two.
In this case the argument is the same as the classification of CP $6$-folds with Picard number two in \cite[Proposition~2.8, Theorem~2.9 and Theorem~4.3]{Kan15b} except for the proof of \cite[Proposition~2.3~(6)$\implies$(1)]{Kan15b}.
Here we give an alternative argument. We need to show the following:

Let $\sE$ be a vector bundle of rank $r$ over a manifold $Y$ and $\pi \colon \bP(\sE ) \to Y$ the natural projection.
Assume that $\bP(\sE )$ is a Fano manifold with Condition~\ref{cond:*}.
If $\sE$ splits into a direct sum of line bundles, then $\bP (\sE )$ is trivial.
 
As in the proof of \cite[Proposition~2.3~(6)$\implies$(1)]{Kan15b}, we may assume that $\Pic Y =\bZ$ and $\sE = \oplus \cO (a_i)$ for some integers $a_1 \leq \cdots \leq a_r$.
By twisting with a line bundle, we may assume that $a_1= \cdots = a_s=0$  and $a_{s+1} \neq 0$ for some integer $s \geq 1$.
If $s <r$, then the relative tautological divisor $\xi$ is nef and big but not ample, which contradicts the fact that $\bP(\sE)$ satisfies Condition~\ref{cond:*}.
Hence we have $r=s$, which completes the proof.
\end{proof}

\section{The case $\rho_X = n-5$}\label{sec:proof}

In this section, we shall complete our proof of Theorem~\ref{thm:rhon-6} in the case $\rho_X = n-5$.
Before proving it, we include here the classification of Fano manifold  of dimension at most six with Condition~\ref{cond:*} for convenience of the readers.
This is equivalent to the classification of rational homogeneous manifolds of dimension at most six by Theorem~\ref{thm:rhon-5}.

\begin{proposition}\label{prop:rhm}
Fano manifold  of dimension at most 6 with Condition~\ref{cond:*} is one of the following:
\renewcommand{\arraystretch}{1.2}

\medskip
\noindent
\begin{tabularx}{\textwidth}{llX}
$\dim X$ &$\rho_{X}$ & $X$\\
 \hline
$6$
&
$1$
&
$\bP^6$,
$\bQ^6$,
$G(2,5)$
or $LG(3,6)$,\\

&
$2$
&
$\bP ^1  \times \bP ^5$,
$\bP ^1  \times \bQ ^5$,
$\bP ^1  \times K(G_2)$,
$ \bP (\sC )$,
$\bP ^2 \times \bP ^4$,
$\bP ^2 \times \bQ ^4$,
$\pbP{3}{2}$,
$\bP ^3 \times \bQ ^3$
or $\pbQ{3}{2} $,\\
 
&
$3$
&
$\pbP{1}{2}  \times \bP ^4$,
$\pbP{1}{2}  \times \bQ ^4 $,
$\bP ^1  \times \bP (\sS _i)$,
$\bP ^1  \times \bP (T_{\bP ^3} )$,
$\bP ^1  \times \bP ^2 \times \bP ^3$,
$\bP ^1  \times \bP ^2 \times \bQ ^3$,
$\pbP{2}{3}$,
$\bP ^2 \times \bP (\sN )$,
$ F(1,2,3;4)$,
$ \bP (T_{\bP ^2}) \times \bP ^3$
or $\bP (T_{\bP ^2}) \times \bQ ^3$,\\

&
$4$
&
$\pbP{1}{3}  \times \bP ^3$,
$\pbP{1}{3}  \times \bQ ^3$,
$\pbP{1}{2}  \times \bP (\sN )$,
$\pbP{1}{2}  \times \pbP{2}{2}$,
$\bP ^1  \times \bP (T_{\bP ^2}) \times\bP ^2 $
or $ \pbPB{T_{\bP ^2}}{2}$,\\

&
$5$
&
$\pbP{1}{4}  \times \bP ^2$
or $\pbP{1}{3}  \times \bP (T_{\bP ^2})$,\\

&
$6$
&
$\pbP{1}{6}$,\\
\hline

$5$
&
$1$
&
$\bP ^5$,
$\bQ ^5$
or  $K(G_2) $,\\
 
&
$2$
&
$\bP ^{1}  \times \bP ^4$,
$\bP ^{1}  \times \bQ ^4$,
$\bP (\sS _i)$,
$ \bP (T_{\bP ^3} )$,
$ \bP ^2 \times \bP ^3$
or $ \bP ^2 \times \bQ ^3$,\\
 
&
$3$
&
$\pbP{1}{2}  \times \bP ^3$,
$\pbP{1}{2}  \times \bQ ^3$,
$\bP^{1}  \times \bP (\sN )$
$\bP^{1} \times \pbP{2}{2}$
or $\bP (T_{\bP ^2}) \times\bP ^2 $,\\

&
$4$
&
$\pbP{1}{3}  \times \bP ^2$
or $\pbP{1}{2}  \times \bP (T_{\bP ^2})$,\\

&
$5$
&
$\pbP{1}5$,\\
\hline
 
$4$
&
$1$
&
$\bP ^4$
or $ \bQ ^4$,\\
 
&
$2$
&
$\bP^{1}  \times \bP ^3$,
$\bP^{1}  \times \bQ ^3$,
$\bP (\sN )$
or $ \pbP{2}{2}$,\\

&
$3$
&
$\pbP{1}{2}  \times \bP ^2$
or $\bP^{1}  \times \bP (T_{\bP ^2})$,\\

&
$4$
&
$\pbP{1}4$,\\
\hline
 
$3$
&
$1$
&
$\bP ^3$
or $\bQ ^3 $,\\

&
$2$
&
$\bP^{1} \times \bP ^2$,\\

&
$3$
&
$\pbP{1}3$,\\
\hline
 
$2$
&
$1$
&
$\bP ^2$,\\

&
$2$
&
$\pbP{1}2$,\\
\hline

$1$
&
$1$
&
$\bP^{1}$,\\
\end{tabularx}

\renewcommand{\arraystretch}{1}

\medskip
\noindent
where $G(2,5)$ is the Grassmannian of planes in $\bC ^5$, $LG(3,6)$ is the Lagrangian Grassmannian of three dimensional subspaces in $\bC^6$, $F(1,2,3;4)$ is the variety of complete flags in $\bC^4$ (see also Convention~\ref{conv}).
\end{proposition}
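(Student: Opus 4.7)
The plan is to combine Theorem~\ref{thm:rhon-5} with the standard classification of rational homogeneous manifolds of low dimension.

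First I would observe that Theorem~\ref{thm:rhon-5} already handles almost all cases: for $n\leq 5$, any Picard number $\rho_X\geq 1$ satisfies $\rho_X>n-5$, and for $n=6$ the hypothesis becomes $\rho_X\geq 2$. The remaining case $n=6$, $\rho_X=1$ is immediate from Condition~\ref{cond:*} applied to the unique elementary contraction $X\to\mathrm{pt}$: requiring this to be a rational homogeneous fibration means that $X$ itself, as the fiber, is rational homogeneous. Conversely, every rational homogeneous manifold trivially satisfies Condition~\ref{cond:*}. Thus the proposition reduces to listing all rational homogeneous manifolds of dimension at most~$6$.

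Every such manifold factors uniquely as a product $\prod_i G_i/P_i$ with each $G_i$ simple and $P_i\subsetneq G_i$ parabolic, where $\dim X=\sum_i \dim G_i/P_i$ and $\rho_X=\sum_i\rho_{G_i/P_i}$ (the latter equalling the number of marked simple roots). I would therefore first tabulate the irreducible building blocks $(G,P)$ with $\dim G/P\leq 6$, using the formula that $\dim G/P$ equals the number of positive roots of $G$ not supported in the Levi of $P$. Running over all connected Dynkin diagrams of rank at most six and all nonempty subsets of nodes, and applying the standard low-rank isomorphisms $B_2\cong C_2$, $A_3\cong D_3$, and $B_3/P_3\cong D_4/P_1\cong\bQ^6$ yields: $\bP^1$ in dimension $1$; $\bP^2$ in dimension $2$; $\bP^3$, $\bQ^3$, $\bP(T_{\bP^2})$ in dimension $3$; $\bP^4$, $\bQ^4$, $\bP(\sN)$ in dimension $4$; $\bP^5$, $\bQ^5$, $K(G_2)$, $\bP(\sS_i)$, $\bP(T_{\bP^3})$ in dimension $5$; and $\bP^6$, $\bQ^6$, $G(2,5)$, $LG(3,6)$, $\bP(\sC)$, $F(1,2,3;4)$ in dimension $6$. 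Taking all multisets of these blocks with total dimension at most $6$ then produces the table in the statement, the total Picard number being the sum of the Picard numbers of the factors.

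The main obstacle is the bookkeeping: one must verify completeness of the irreducible list (no partial flag $G/P$ with $G$ simple of rank $\leq 6$ and dimension $\leq 6$ is overlooked) and correctly identify spurious coincidences between distinct diagram data, such as the two partial flags $F(1,2;4)$ and $F(2,3;4)$ of $A_3$ appearing as $\bP(\sS_1)$ and $\bP(\sS_2)$ on $\bQ^4$ via the isomorphism $A_3\cong D_3$. These are routine verifications via Dynkin-diagram combinatorics and standard low-rank exceptional isomorphisms.
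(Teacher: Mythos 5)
Your proposal is correct and follows essentially the same route as the paper: the paper likewise dispenses with this proposition in one line, observing that by Theorem~\ref{thm:rhon-5} (together with the trivial case $n=6$, $\rho_X=1$, where Condition~\ref{cond:*} directly forces $X$ to be rational homogeneous) the statement reduces to the classification of rational homogeneous manifolds of dimension at most six. Your explicit Dynkin-diagram enumeration of the irreducible factors and their products supplies the bookkeeping the paper leaves implicit, and your list of building blocks agrees with the table.
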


First we show the assertion in the case of $7$-folds with Picard number two (Proposition~\ref{prop:7-folds}).

\begin{notation}
Let $X$ be a Fano $7$-fold with Picard number two which satisfies Condition~\ref{cond:*}, and  $\pi \colon X \to Y$ and $p \colon X \to Z$ the elementary contractions:
\[
\xymatrix{
  & X \ar[ld]_{\pi} \ar[rd]^{p} &   \\
Y &                             & Z.\\
}
\]
We have $\dim Y + \dim Z \geq \dim X$ and may assume that $\dim Y \geq \dim Z$.
\end{notation}
We denote by  $F(i,j;k)$  the variety of flags $(V_{i} \subset V_{j} \subset \bC ^{k})$ with $\dim V_{i} =i$ and  $\dim V_{j}=j$.

\begin{proposition}\label{prop:7-folds}
In the above notation, $X$ is one of the following:
\begin{enumerate}
\item $\bP^r \times Y$ or  $\bQ^3 \times Y$, where $Y$ is a rational homogeneous manifold.
\item $F(1,2;5)$ or $F(1,4;5)$.
\item ${X_0}$ as in Definition~\ref{def:Ottaviani}.
\end{enumerate}
In particular, Theorem~\ref{thm:rhon-6} holds in this case.
\end{proposition}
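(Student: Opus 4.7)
The plan is a case analysis on $d := \dim Y \in \{4,5,6\}$, which is forced by $\dim Y \geq \dim Z$ together with $\dim Y + \dim Z \geq \dim X = 7$. By Proposition~\ref{prop:cont}, both $Y$ and $Z$ are rational homogeneous manifolds with Picard number one, and every fiber of $\pi$ and of $p$ is itself a rational homogeneous manifold of Picard number one, so the possible fiber types (determined by their dimension $7-d$ or $7-\dim Z$) are drawn from $\bP^1$, $\bP^2$, $\bP^3$, $\bQ^3$.

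When $d=5$, the fibers of $\pi$ are two-dimensional rational homogeneous manifolds with $\rho = 1$, hence $\bP^2$. Then $\pi$ is a smooth $\bP^2$-fibration and Theorem~\ref{thm:char} directly gives $X \simeq \bP^2 \times Y$ or $X \simeq X_0$.

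When $d=6$, $\pi$ is a smooth $\bP^1$-fibration over $Y \in \{\bP^6,\bQ^6,G(2,5),LG(3,6)\}$; the Brauer group of $Y$ vanishes, so $X \simeq \bP_Y(\sE)$ for a rank-two vector bundle $\sE$, and Condition~\ref{cond:*} forces the second contraction $p$ to be smooth and of fiber type, so $\sE$ is a Fano bundle whose slope $\tau$ lies in $[0, r_Y)$. The case $\tau = 0$ gives $X \simeq \bP^1 \times Y$ by Proposition~\ref{prop:tau}. For $\tau > 0$ one runs the slope/Chern-class recipe of Lemma~\ref{lem:num}: substituting $\sE$ into the intersection formulae of Propositions~\ref{prop:vanishing} and~\ref{prop:eq} produces a diophantine system in $c_1(\sE), c_2(\sE)$, and a case-by-case solution across the four candidate $Y$'s pins down $Y = G(2,5)$ with $\sE = \mathcal{U}$ the tautological rank-two subbundle, whence $X \simeq F(1,2;5)$.

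When $d=4$, $Y \in \{\bP^4,\bQ^4\}$ and the $\pi$-fibers are either $\bP^3$ or $\bQ^3$. The $\bQ^3$-fiber case is handled by the classification of rank-two quadric fibrations of Fano type (together with Condition~\ref{cond:*} ruling out non-trivial twists), producing $X \simeq \bQ^3 \times Y$. The $\bP^3$-bundle case puts $X \simeq \bP_Y(\sE)$ for a rank-four Fano bundle; invoking the invariants $d_i(\sE), \Delta_i(\sE)$ of Section~\ref{sec:pre} and using $\dim Z \in \{3,4\}$, one writes down and solves the rank-four analogue of equations~\eqref{eq:num1}--\eqref{eq:num2}, obtaining the slope-zero products $\bP^3 \times Y$ and, as the only non-split solution, $Y = \bP^4$ with $\sE$ the universal rank-four quotient bundle, whence $X \simeq F(1,4;5)$. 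The final sentence of the proposition then follows by an induction on $n$ combined with Proposition~\ref{prop:iniFT} to peel off $\bP^1$-factors. The principal obstacle is precisely this last rank-four slope analysis: the Chern-class system has four unknowns and many spurious numerical solutions to eliminate, so ruling out $Y = \bQ^4$ in the non-split range requires both a resultant-type elimination in the spirit of \eqref{eq:syl} and a stability check analogous to the concluding lemma of Section~\ref{sec:char}.
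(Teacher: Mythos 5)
Your case division on $\dim Y\in\{4,5,6\}$ and your treatment of $\dim Y=5$ (reduce to a smooth $\bP^2$-fibration and invoke Theorem~\ref{thm:char}) match the paper. But the two remaining cases contain genuine gaps. For $\dim Y=4$ you propose to realize $X$ as $\bP_Y(\sE)$ for a rank-four Fano bundle and to solve ``the rank-four analogue of \eqref{eq:num1}--\eqref{eq:num2}''; you yourself flag this as the principal obstacle and do not carry it out, and even if the Diophantine system were solved, Chern classes alone do not identify a bundle up to isomorphism --- one would still need a stability/moduli argument as in Section~\ref{sec:char}. The paper avoids this entirely: it splits on $\dim Z\in\{3,4\}$ rather than on the fiber type of $\pi$. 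When $\dim Z=3$ it shows $-K_\pi$ is nef and defines $p$, so that every $p$-fiber maps \'etale, hence isomorphically, onto $Y$, giving $X\simeq Y\times Z$ at once (this is also where $\bQ^3\times Y$ comes from --- not from any ``classification of rank-two quadric fibrations of Fano type'', a result you cite only vaguely and which is not what the paper uses). When $\dim Z=4$ it rules out a smooth $\bQ^3$-fibration by embedding $X$ as a relative quadric in $\bP(\sE)$ for a \emph{rank-five} self-dual bundle and computing that the slope must vanish, forcing $\dim Z=3$, a contradiction; with both contractions then smooth $\bP^3$-fibrations, \cite[Theorem~2]{OW02} yields $F(1,4;5)$. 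Your outline contains neither the \'etale-section argument nor the relative-quadric computation, which are the actual content of this case.

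The $\dim Y=6$ case has a similar, smaller defect: Lemma~\ref{lem:num} and equations \eqref{eq:num1}--\eqref{eq:num2} are derived specifically for rank-three bundles over the three homogeneous $5$-folds with cyclic Chow groups, and they do not transplant to rank-two bundles over $G(2,5)$, where $b_4=2$ and the class $c_2(\sE)$ is not a single integer. The paper instead quotes the known classification of rank-two Fano bundles on $\bP^6$, $\bQ^6$, $G(2,5)$ and $LG(3,6)$ (\cite{APW94,MOS12,MOS14}), which leaves only the split bundles and the universal subbundle on $G(2,5)$; again, pinning down the bundle (not merely its Chern classes) is exactly what your numerical scheme cannot deliver. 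Finally, the closing ``in particular'' clause needs no induction or appeal to Proposition~\ref{prop:iniFT}: every variety on the list except $X_0$ is visibly homogeneous, so Theorem~\ref{thm:rhon-6} follows immediately in this case.
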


\begin{proof}
By the equations $\dim Y + \dim Z \geq \dim X$ and $\dim Y \geq \dim Z$,
we have $\dim Y \geq 4$.

First assume $\dim Y = 6$.
Then $Y$ is isomorphic to $\bP^6$, $\bQ^6$, Grassmannian $G(2,5)$ or Lagrangian Grassmannian $LG(3,6)$; $X$ is isomorphic to some projectivised vector bundle $\bP (\sE)$ with a Fano bundle of rank two on $Y$ and $\pi$ is the natural projection since the Brauer group of $Y$ is trivial.
Note that the fourth Betti number $b_4(LG(3,6))=1$.
Then, by the classification of Fano bundles of rank two \cite{APW94}, \cite{MOS12} and \cite[Lemma~6.1 and Theorem~6.5]{MOS14}, $\sE$ is a split vector bundle $\cO \oplus \cO(a)$ or the universal subbundle on $G(2,5)$ up to twist with some line bundle.

If $\sE$ splits, then $\sE \simeq \cO ^{\oplus 2}$ since the other contraction $p$ is  of fiber type by Condition~\ref{cond:*}.
Hence $X \simeq \bP^1 \times Y$.

If $\sE$ is the universal subbundle on $G(2,5)$, then $X \simeq F(1,2;5)$.

\medskip
Second assume $\dim Y = 5$.
Then by Condition~\ref{cond:*}, $p$ is a $\bP ^{2}$-bundle and hence the assertion follows from Theorem~\ref{thm:char}.

\medskip
Finally we assume that $\dim Y = 4$.
Then $\pi$ is a $\bP^3$-bundle or a $\bQ^3$-bundle on $Y$, and we have $\dim Z = 3$ or $4$.

If $\dim Z = 3$, then $-K_\pi$ is nef by \cite[Proposition~3.1]{Kan15b}, and hence $-K _\pi$ defines the contraction $p$.
Then, for a $p$-fiber $F'$, we have
\[
-K_{F'} = -K_X|_{F'}=( -\pi^*K_Y  -K_\pi)|_{F'} =-\pi^*K_Y|_{F'}.
\]
Therefore the morphism $F '\to Y$ is \'etale, and hence isomorphism.
This implies that $X \simeq Y \times Z$, where $Y$ and $Z$ are homogeneous by Condition~\ref{cond:*}.

Hence we may assume that $\dim Z = 4$.
In this case it is enough to show that $\pi$ and $p$ are smooth $\bP^3$-fibrations by \cite[Theorem~2]{OW02}.
Assume to the contrary that one of the morphisms is a smooth $\bQ^3$-fibration.
We may assume that $\pi$ is a smooth $\bQ^3$-fibration.
Then, by the Serre spectral sequence, $H^2(X,\bZ) \to H^2(F,\bZ)$ is surjective, where $F$ is a $\pi$-fiber.
Hence there exists a vector bundle $\sE$ of rank five on $Y$ such that $X$ is a (relative) quadric in $\bP(\sE)$, more precisely; 
\begin{enumerate}
 \item $X \subset \bP(\sE)$ and $X \in |2 \,\xi + m \,\varphi ^{*} H_{Y}|$, where $\xi$ is the tautological divisor on $\bP(\sE)$ and $H_{Y}$ is the ample generator of $\Pic (Y)$.
 \item $\sE ^{*} \simeq \sE (m H_{Y})$ by the section $s \in H ^{0}(S^{2}\sE (mH_{Y}))$ corresponding  to $X \in |2 \,\xi + m \,\varphi ^{*} H_{Y}|$.
\end{enumerate}
\[\xymatrix{
    X   \ar[d]_{\pi}  \ar@{^{(}->}[r]^{i}    &   \bP(\sE) \ar[ld]^{\varphi} \\
Y   .       &                     \\
}\]
Since the rank of $\sE$ is odd, $m$ is an even number by (2).
Hence we may assume that $m=0$ by twisting $\sE$ with a line bundle. 

Note that, since $\sE \simeq \sE ^*$, odd Chern classes vanish and the Grothendieck relation of $\bP(\sE)$ is 
\begin{align}\label{eq:Gro}
\xi ^5 + \varphi^* c_2(\sE) \cdot \xi^3 + \varphi^* c_4(\sE) \cdot \xi =0.
\end{align}

Also note that $-K_\pi = 3 \xi|_X$ by the adjunction.

Let $\tau$ be the slope for $\pi \colon X \to Y$, that is $-K_\pi + \tau \pi^*H_Y $ is nef but not ample
(cf.\ Subsection~\ref{subsec:slope} for projectivised vector bundles).
Then $g$ is defined by the divisor $-K_\pi + \tau \pi^*H_Y $, and hence
\[
( -K_\pi + \tau \pi^*H_Y)^6\cdot \pi^*H_Y=(-K_\pi + \tau \pi^*H_Y) ^5 \cdot \pi^*H_Y^2 =0.
\]
Since $X \in |2\xi|$, we rewrite these as follows:
\[
( 3\xi + \tau \varphi^*H_Y)^6\cdot \varphi^*H_Y \cdot \xi=(3 \xi + \tau \varphi^*H_Y) ^5 \cdot \varphi^*H_Y^2 \cdot \xi =0
\text{ on $\bP(\sE)$.}
\]
Combining with the Grothendieck relation \eqref{eq:Gro}, we have 
\begin{align*}
 10 H_Y^4\, \tau^2 -3^2 c_2(\sE) H_Y^2=0, \\
 10 H_Y^4\, \tau^3 -3^3 c_2(\sE) H_Y^2 \, \tau=0.
\end{align*}
This implies that $\tau = 0$ and $c_2(\sE) H_Y^2 =0$.
Hence $-K_\pi$ is nef but not ample, and defines the other contraction $p$.
It follows from the Grothendieck relation \eqref{eq:Gro} and $c_2(\sE) H_Y^2 =0$ that $(-K_\pi)^4 \cdot \pi^*H_Y^3=0$.
Hence we have $\dim Z = 3$, which contradicts our assumption $\dim Z =4$.
\end{proof}

Second we show the assertion in the case of $8$-folds with Picard number three:
\begin{proposition}\label{prop:8-folds}
Let $X$ be a Fano $8$-fold with Picard number three which satisfies Condition~\ref{cond:*}.
Then $X$ is a complete flag manifold or  a product of two Fano manifolds with Condition~\ref{cond:*}.
In particular, Theorem~\ref{thm:rhon-6} holds in this case.
\end{proposition}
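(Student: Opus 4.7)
My plan is to carry out a case analysis on the three elementary contractions of $X$, in the spirit of the proof of Proposition~\ref{prop:7-folds} and \cite[Theorem~4.3]{Kan15b}. By Proposition~\ref{prop:cont}, the cone $\NE(X)$ is simplicial with three extremal rays, so there are three smooth elementary contractions $\pi_i\colon X\to Y_i$ ($i=1,2,3$) with rationally homogeneous fibers $F_i$ of Picard number one; each base $Y_i$ is a Fano manifold of Picard number two satisfying Condition~\ref{cond:*}. If $\dim Y_i\le 6$ then $Y_i$ is rational homogeneous by Theorem~\ref{thm:rhon-5} (and appears on the list of Proposition~\ref{prop:rhm}), while if $\dim Y_i=7$ then $Y_i$ is classified by Proposition~\ref{prop:7-folds}.

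The first clean case is when every $\pi_i$ is a smooth $\bP^1$-fibration; then by \cite{OSWW14} the manifold $X$ itself is a complete flag manifold. Otherwise, after reordering we may assume $\dim F_3\ge 2$. The key observation is that whenever one of the bases $Y_i$ itself has all its own elementary contractions being smooth $\bP^1$-fibrations---equivalently, $Y_i$ is a complete flag manifold of Picard number two, i.e., one of $\bP^1\times\bP^1$, $F(1,2;3)$, $\bP(\sN)\simeq Sp_4/B$, or $\bP(\sC)\simeq G_2/B$---Proposition~\ref{prop:iniFT} yields $X\simeq F_i\times Y_i$, a product of two Fano manifolds with Condition~\ref{cond:*}.

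Thus the substantive step is to show that some $Y_i$ must be such a complete flag. I would scan the classifications of Proposition~\ref{prop:rhm} and Proposition~\ref{prop:7-folds} and, for each hypothetical triple $(Y_1,Y_2,Y_3)$ in which no $Y_i$ is a complete flag, use the constraints coming from the other contractions to derive a contradiction. Here Proposition~\ref{prop:cont}~\ref{prop:cont3}---the identification of $\NE(F_i)$ with a face of $\NE(X)$---is decisive: for any two elementary contractions $\pi_i,\pi_j$, the restriction of $\pi_j$ to a fiber of $\pi_i$ is an elementary contraction of $F_i$, which must match one of the rational homogeneous contractions of $F_i$. This severely restricts the possible triples $(d_1,d_2,d_3)$ and the combinatorial types of the $F_i$, and after a finite check the only remaining possibilities should be the product configurations already accounted for.

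The main obstacle will be eliminating the residual exceptional cases, particularly when some $Y_i$ is the Ottaviani $7$-fold $X_0$ of Definition~\ref{def:Ottaviani}: because $X_0$ itself is not homogeneous, the inductive framework does not directly apply, and one must verify by hand that a smooth $\bP^1$-bundle $X\to X_0$ satisfying Condition~\ref{cond:*} has to be trivial, giving $X\simeq\bP^1\times X_0$. I expect this to require numerical computations using the invariants $d_i(\sE)$ and $\Delta_i(\sE)$ of Section~\ref{sec:pre} together with the slope machinery of Subsection~\ref{subsec:slope}, paralleling the analysis in Theorem~\ref{thm:char}. The cases $Y_i=F(1,2;5)$ or $F(1,4;5)$ should be more tractable, since these bases are themselves homogeneous and the possible lifts can be classified via known results on Fano bundles on partial flag varieties.
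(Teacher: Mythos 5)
There is a genuine gap at what you call the ``substantive step.'' You propose to show that, outside the complete-flag case, some elementary-contraction image $Y_i$ must be a complete flag manifold of Picard number two (so that Proposition~\ref{prop:iniFT} applies), and to derive a contradiction from every triple $(Y_1,Y_2,Y_3)$ in which none is. This dichotomy is false: for $X=\bP^2\times\bP^3\times\bQ^3$ the three elementary images are $\bP^3\times\bQ^3$, $\bP^2\times\bQ^3$ and $\bP^2\times\bP^3$, none of which is a complete flag manifold, and there is of course no contradiction --- $X$ is simply a product of two Fano manifolds with Condition~\ref{cond:*} neither of whose factors is a complete flag. Your proposal contains no mechanism for \emph{producing} the splitting $X\simeq A\times B$ in such cases, and that is precisely the content of the proposition. (The constraint you invoke from Proposition~\ref{prop:cont}~\ref{prop:cont3} also gives nothing here: a fiber $F_i$ of an elementary contraction has Picard number one, so $\pi_j|_{F_i}$ for $j\neq i$ is finite and imposes no combinatorial restriction.) The paper instead first disposes of all $X$ admitting \emph{some} contraction, not necessarily elementary, onto a complete flag manifold --- this already covers every elementary image with a $\bP^1$-factor, and in particular the configuration with a $Y_i\simeq X_0$, via the composite $X\to\bP^1\times\bQ^5\to\bP^1$ --- and then fixes an elementary image $X_1$ of dimension at most $6$ from a short list, forms the full cube of contractions \eqref{diagram}, identifies the fibers of the two-step contraction $h=g_1\circ f_1$ (which have Picard number \emph{two}) as products using Proposition~\ref{prop:rhm}, and deduces that the relevant squares are Cartesian, with one case requiring the universality of Hilbert schemes. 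Some version of this Cartesian-square analysis cannot be skipped.

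Relatedly, your plan to handle $Y_i\simeq X_0$ by proving directly that every smooth $\bP^1$-bundle over $X_0$ with Condition~\ref{cond:*} is trivial is both unnecessary (by the reduction just described) and not supported by the tools you cite: the invariants $d_i(\sE)$, $\Delta_i(\sE)$ and the slope machinery of Section~\ref{sec:pre} are developed in the paper only over bases of Picard number one, so carrying out that computation over $X_0$ would require substantial new development rather than a routine application of Theorem~\ref{thm:char}-style arguments.
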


\begin{proof}
By the assumption on $X$, there exists the following commutative diagram whose arrows are pairwise distinct elementary rational homogeneous fibrations by Proposition~\ref{prop:cont}:
\begin{align}\label{diagram}
\begin{gathered}
\xymatrix@=10pt{
X  \ar[dd]_{f_2} \ar[rd]^{f_1} \ar[rr] && X_3 \ar'[d][dd] \ar[rd] & \\
& X_1 \ar[dd]^(0.4){g_1} \ar[rr] && X_{3,1} \ar[dd] \\
X_2 \ar'[r][rr] \ar[rd]_{g_2} && X_{2,3} \ar[rd] \\
& X_{1,2} \ar[rr] && \text{pt}.
}
\end{gathered}
\end{align}
Set $h \coloneqq g_1 \circ f_1 = g_2 \circ f_2 $.

If every elementary contraction of $X$ is a smooth $\bP^1$-fibrations, then $X$ is a complete flag manifold by \cite{OSWW14}, and the assertion follows.
If $X$ admits a contraction $\pi$ onto a complete flag manifold $M$, then $X$ is isomorphic to the product of a $\pi$-fiber and $Y$ by Proposition~\ref{prop:iniFT}, and the assertion follows also in this case.

\medskip
Therefore, we may assume that $X$ has an elementary contraction which is not a smooth $\bP^1$-fibration and that $X$ does not admit a contraction onto a complete flag manifold.
By renumbering if necessary, we may assume that $\dim X_1 \leq 6$.
Then, by our assumption and Propositions~\ref{prop:cont} and \ref{prop:rhm},  $X_1$ is one of the following homogeneous manifolds:
\begin{enumerate}
 \item \label{X11}$\bP^2 \times \bP^4$, $\bP^2 \times \bQ^4$,  $\bP^2 \times \bP^3$, $\bP^2 \times \bQ^3$, $(\bP^2)^2$,
 \item \label{X12}$\bP(\sS _i)$, $\bP(T_{\bP^3})$,
  \item \label{X13}$\bP^3 \times \bQ^3$, $(\bP^3)^2$, $(\bQ^3)^2$.
\end{enumerate}

If Case~\ref{X11} or \ref{X12} occurs, then we may assume that $g_1$ is a $\bP^2$-bundle.
Every $h$-fiber is a Fano manifold with Condition~\ref{cond:*} of dimension $4$, $5$ or $6$ by Proposition~\ref{prop:cont} and it admits a contraction onto $\bP^2$.
Then every $h$-fiber is isomorphic to $\bP^2 \times \text{(an $f_1$-fiber)}$ by Proposition~\ref{prop:rhm}.
Hence the square on the left of \eqref{diagram} is a Cartesian product and $f_2$ is a $\bP^2$-bundle.

Therefore, if Case~\ref{X11} occurs, then we have $X \simeq X_{3,1} \times X_2$ since the squares on the left and the front are Cartesian products.

On the other hand, if Case~\ref{X12} occurs, then we have $X_{1,2} \simeq \bP^3$.
Then $\text{(an $f_1$-fiber)} \simeq \text{(a $g_2$-fiber)} \simeq \bP^3$ or $\bQ^3$.
Hence  $X_2 \simeq X_{1,2} \times \text{(a $g_2$-fiber)}$ and $X_{2,3} \simeq \text{(a $g_2$-fiber)}$ by Propositions~\ref{prop:cont} and \ref{prop:rhm}.
Then the square on the bottom of \eqref{diagram} is a Cartesian product and $X \simeq X_1 \times \bP^3$ or $X_1 \times \bQ^3$. 

\medskip
Assume that Case~\ref{X13} occurs.
If the square on the left of \eqref{diagram} is a Cartesian product, then $X \simeq X_2 \times X_{3,1}$ and the assertion follows.
Hence we may assume that the square on the left is not a Cartesian product.
Then $h$-fibers are isomorphic to $\bP(\sS _i)$ or $\bP(T_{\bP^3})$ by Propositions~\ref{prop:cont} and \ref{prop:rhm}.
In particular we have $g_1$ is a $\bP^3$-bundle.
In this case, $f_2 \colon X \to X_2$ is the family of linear subspaces in $g_1$-fibers.
By the universality of Hilbert schemes, we have $X \simeq X_{1,2} \times  \bP(\sS _i)$ or $X_{1,2} \times \bP(T_{\bP^3})$.
\end{proof}

We prove the following, which completes the proof of our main theorem:

\begin{theorem}
Let $X$ be a Fano $n$-fold with  Condition~\ref{cond:*} and Picard number $\rho_X = n-5$.
Then $X$ is 
a rational homogenous manifold or 
$(\bP^1)^{n-7} \times ({X_0} \text{ as in Definition~\ref{def:Ottaviani}})$.
\end{theorem}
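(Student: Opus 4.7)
The plan is to proceed by induction on $n$, with base cases $n\in\{6,7,8\}$. For $n=6$ we have $\rho_X=1$, so Condition~\ref{cond:*} immediately makes $X$ rational homogeneous; the case $n=7$ is exactly Proposition~\ref{prop:7-folds}. For $n=8$ we refine the product alternative of Proposition~\ref{prop:8-folds}: given $X\simeq Y_1\times Y_2$ with each $Y_i$ Fano satisfying Condition~\ref{cond:*}, the splitting $\dim Y_1+\dim Y_2=8$, $\rho_{Y_1}+\rho_{Y_2}=3$ forces $(\dim Y_1-\rho_{Y_1})+(\dim Y_2-\rho_{Y_2})=5$. A brief case analysis on this split---using Theorem~\ref{thm:rhon-5} when both differences are positive (each $\rho_{Y_i}>\dim Y_i-5$), and Proposition~\ref{prop:7-folds} together with Condition~\ref{cond:*} when one factor is a power of $\bP^1$---shows that $X$ is rational homogeneous or $\bP^1\times X_0=(\bP^1)^{n-7}\times X_0$.

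For the inductive step we assume $n\ge 9$. Since $\rho_X=n-5\ge 4$, the inequality $n\le 2\rho_X+1$ holds, so Proposition~\ref{prop:2rho} applies. Its second alternative lists only rational homogeneous manifolds; hence we may assume the first alternative: $X\simeq Y\times M$ with $Y$ Fano satisfying Condition~\ref{cond:*} and $M$ a complete flag manifold of positive dimension. We then split on whether $M$ is a product of $\bP^1$'s. By the classification of simple Lie algebras, a complete flag $M$ satisfies $\dim M\ge\rho_M$, with equality if and only if $M=(\bP^1)^{\rho_M}$. If $\dim M>\rho_M$, then $\rho_Y=\rho_X-\rho_M=(n-5)-\rho_M>(n-\dim M)-5=\dim Y-5$, so Theorem~\ref{thm:rhon-5} forces $Y$ to be rational homogeneous, and hence so is $X$. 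If instead $M=(\bP^1)^{\rho_M}$, then $\rho_Y=\dim Y-5$ and $\dim Y<n$; by induction $Y$ is rational homogeneous or $(\bP^1)^{\dim Y-7}\times X_0$, and regrouping the $\bP^1$-factors yields $X\simeq(\bP^1)^{n-7}\times X_0$, since $\dim Y+\rho_M=\dim Y+\dim M=n$.

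The main obstacle is not the induction itself, which is essentially bookkeeping using Proposition~\ref{prop:2rho}, Theorem~\ref{thm:rhon-5}, and the numerical dichotomy for complete flags; rather, the substantive input is concentrated in the two base cases $n=7$ and $n=8$, which rest on the characterization of $X_0$ (Theorem~\ref{thm:char}) and the two-contraction diagram analyses of Propositions~\ref{prop:7-folds} and~\ref{prop:8-folds}. With those results in hand, the step from $n-1$ to $n$ reduces to the clean dichotomy on the complete flag factor $M$ described above.
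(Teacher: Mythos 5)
Your proposal is correct and follows essentially the same route as the paper: induction on $n$ with base cases $n=6,7,8$ handled by Condition~\ref{cond:*} and Propositions~\ref{prop:7-folds} and~\ref{prop:8-folds}, and the inductive step for $n\ge 9$ via Proposition~\ref{prop:2rho} together with the dichotomy $\dim M>\rho_M$ versus $M\simeq(\bP^1)^{\rho_M}$ for the complete flag factor. The only (immaterial) difference is the order of the case split in the inductive step: the paper first applies the inductive hypothesis to $Y$ using $\dim Y-\rho_Y\le 5$ and then deduces $\dim M=\rho_M$ when $Y$ contains the factor $X_0$, whereas you split on $M$ first.
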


\begin{proof}
We proceed by induction on $n$.
The assertion in the case of $n=6$ follows from the definition and the assertion in the cases $n=7$ or $n=8$ follows from Propositions~\ref{prop:7-folds} and \ref{prop:8-folds}.
If $n>8$, then $n\leq 2 \rho_X +1$ holds.
Therefore  $X$ is homogeneous or a product $Y \times M$ with $M$ is a complete flag manifold by Proposition~\ref{prop:2rho}.
In the latter case, since 
\[
\dim Y - \rho_Y \leq \dim X - \rho_X =5,
\]
$Y$ is a rational homogenous manifold or $(\bP^1)^{\dim Y-7} \times {X_0}$ by our inductive hypothesis.

If $Y$ is a rational homogenous manifold, then the assertion follows.
If $Y$ is isomorphic to $(\bP^1)^{\dim Y-7} \times {X_0}$, then $\dim M = \rho_{M} $.
Hence $M \simeq (\bP^1)^{\dim X - \dim Y}$  by \cite[Proposition 2.4]{BCDD03}, \cite[Proposition 5.1]{NO07} or \cite[Proposition~2.3]{Wat14a}.
This completes the proof.
\end{proof}

Finally, we prove Corollary~\ref{cor:CPn-6}.

\begin{proof}[Proof of Corollary~\ref{cor:CPn-6}]
By Theorem~\ref{thm:rhon-6},
it is enough to show that every CP $n$-fold with $\rho _X > n-6$ satisfies Condition~\ref{cond:*}.
The proof is proceeded by induction on $\rho _X$.
Note that every CP manifold with Picard number one and dimension at most five is a rational homogeneous manifold by \cite{CP91,Hwa06,Mok02,Kan15a}.
Hence, by our assumption, every CP manifold with Picard number one and dimension at most six is a rational homogeneous manifold, and hence satisfies Condition~\ref{cond:*}.

Let $X$ be a CP $n$-fold with $\rho _X > n-6$ and $\rho_X >1 $.
Suppose that 
\[
X  \xrightarrow{f_1}  X_1  \xrightarrow{f_2} \cdots \xrightarrow{f_{m-1}} X_{m-1} \xrightarrow{f_m}  X_m
\]
is a sequence of elementary contractions.
Then $X_1$ is again a CP manifold by \cite[Theorem~5.2]{DPS94}, \cite[Theorem~4.4]{SW04} or \cite[Proposition~4]{MOSW15} (cf.\ Proposition~\ref{prop:cont}).
Hence by our inductive hypothesis $X_1$ satisfies Condition~\ref{cond:*}, and hence $f_i$ for $i \geq 2$ are rational homogeneous fibrations.
On the other hand, by \cite[Proposition~4]{MOSW15}, every $f_1$-fiber $F$ is a CP manifold with Picard number one and  $\rho_{X_1} \leq \dim X_1$.
Then we have
\[
\dim F = \dim X - \dim X_1 \leq \dim X - \rho_{X_1} = \dim X - \rho_X +1 <7.
\]
Hence $F$ is a rational homogeneous fibration by our assumption and the assertion follows.
\end{proof}

\bibliographystyle{amsplain}
\bibliography{../references20150202}

\end{document}